\documentclass{article}
\usepackage{listings}
\usepackage{amsmath, amsthm, amssymb}
\newcommand{\R}{\mathbb R}
\newcommand{\E}{\mathbb E}
\newtheorem{prop}{Proposition}

\newtheorem{lemma}{Lemma}
\newtheorem{remark}{Remark}
\newtheorem{teo}{Theorem}

\usepackage{graphics}
\usepackage{graphicx}
\usepackage[scale=0.75]{geometry}
\usepackage[utf8]{inputenc}
\usepackage[english]{babel}

\usepackage{listings}
\graphicspath{ {./images/} }
\usepackage[output-decimal-marker={,}]{siunitx}
\usepackage{booktabs}
\usepackage{enumitem}
\allowdisplaybreaks
\usepackage{relsize}

\usepackage{xcolor}
\usepackage{tcolorbox}
\usepackage{verbatim}
\usepackage{bm}

\newcounter{hypoconbisfl}
\newcounter{saveconbisfl}
\newcommand\debutL{\begin{list} {\textbf{A\arabic{hypoconbisfl}}}{\usecounter{hypoconbisfl}}\setcounter{hypoconbisfl}{\value{saveconbisfl}}}
\newcommand\finL{\end{list}\setcounter{saveconbisfl}{\value{hypoconbisfl}}}

\newcounter{hypoconbisf}
\newcounter{saveconbisf}
\newcommand\debutP{\begin{list} {\textbf{B\arabic{hypoconbisf}}}{\usecounter{hypoconbisf}}\setcounter{hypoconbisf}{\value{saveconbisf}}}
\newcommand\finP{\end{list}\setcounter{saveconbisf}{\value{hypoconbisf}}}

\newcounter{hypoconbisx}
\newcounter{saveconbisx}
\newcommand\debutTX{\begin{list} {\textbf{TX\arabic{hypoconbisx}}}{\usecounter{hypoconbisx}}\setcounter{hypoconbisx}{\value{saveconbisx}}}
\newcommand\finTX{\end{list}\setcounter{saveconbisx}{\value{hypoconbisx}}}

\makeatletter
\newcommand{\mathleft}{\@fleqntrue\@mathmargin0pt}
\newcommand{\mathcenter}{\@fleqnfalse}

\makeatother
\title{Convergence to the uniform distribution of vectors of partial sums modulo one with a common factor}
\usepackage{mathtools}

\DeclarePairedDelimiter\floor{\lfloor}{\rfloor}
\author{
	Roberta Flenghi$^1$\\
	\texttt{roberta.flenghi@enpc.fr}
	\and
	Benjamin Jourdain$^1$\\
	\texttt{benjamin.jourdain@enpc.fr}
}

\date{%
	$^1$Cermics, \'Ecole des Ponts, INRIA, Marne-la-Vall\'ee, France.\\}

\newcommand\blfootnote[1]{%
	\begingroup
	\renewcommand\thefootnote{}\footnote{#1}%
	\addtocounter{footnote}{-1}%
	\endgroup
}
\usepackage{mathtools}

\begin{document}
	\maketitle
	\blfootnote{``This work is
		supported by the french National Research Agency under the grant
		ANR-21-CE40-0006 (SINEQ).''}
\begin{abstract}
    In this work, we prove the joint convergence in distribution of $q$ variables modulo one obtained as partial sums of a sequence of i.i.d. square integrable random variables multiplied by a common factor given by some function of an empirical mean of the same sequence. The limit is uniformy distributed over $[0,1]^q$.  To deal with the coupling introduced by the common factor, we assume that the joint distribution of the random variables has a non zero component absolutely continuous with respect to the Lebesgue measure, so that the convergence in the central limit theorem for this sequence holds in total variation distance. While our result provides a generalization of Benford's law to a data adapted mantissa, our main motivation is the derivation of a central limit theorem for the stratified resampling mechanism, which is performed in the companion paper \cite{echant}.  
\end{abstract}	\section{Introduction}\label{introduction}
	Given $\left(Y_i\right)_{i\geq 1}$  a sequence of i.i.d. square integrable random variables and a measurable real valued function $\phi$, we are going to give sufficient conditions for the convergence in distribution to the uniform law on $[0,1]$ of 
	\begin{align}\label{purpose0}
	\left\{\phi\left(\frac{1}{M}\sum_{i=1}^{\beta_M}Y_i\right) \left( Y_1+\cdots+Y_{M}\right)\right\},
	\end{align}for $\left(\beta_M \right)_{M\geq 1} \subseteq \mathbb{N}^*$ such that $\lim\limits_{M\rightarrow \infty}\sqrt{M}\left( \frac{\beta_M}{M}- \beta\right) =0$  with $\beta>1$. Here $\{x\}$ denotes the fractional part of the real number $x$ given by $\{x\}=x-\lfloor x\rfloor$ where $\lfloor x\rfloor$ is the integer such that $\lfloor x\rfloor\le x<\lfloor x\rfloor+1$ and we also define $\lceil x\rceil$ as the integer such that $\lceil x\rceil-1<x\le\lceil x\rceil$.  
	Our main motivation for considering \eqref{purpose0} comes from the derivation of a central limit theorem for the stratified resampling mechanism. Before giving more details about this particular application, let us review the existing literature which addresses the case of a constant function $\phi$ with the derivation of Benford's law as a common motivation. 
	
The convergence in distribution of the sequence  $(V_M=\left\lbrace Y_1+\cdots+Y_M\right\rbrace )_{M\geq 1}$ of sums of random variables defined modulo $1$ to the uniform distribution on $[0,1]$ has been studied by many researchers using Fourier analysis.
	In 1939, L\'{e}vy \cite{Levy}  gave necessary and sufficient conditions for this convergence when the $Y_i$ are i.i.d..
	In 1986, St\"{o}rmer \cite{Stormer} provided sufficient conditions in terms of the distribution functions of the $Y_i$ for the convergence to hold when the $Y_i$ are merely independent. In 2007, under the assumption of independent absolutely continuous $Y_i$,   Miller and Nigrini \cite{Miller.Nigrini} 
        characterized
        the convergence of $(V_M)_{M\geq 1}$ in $L^1\left( \left[0,1 \right] \right)$. In 2010, Szewczak \cite{Szewczak} generalized the above results by getting rid of the hypothesis of independence of the $Y_i$. In particular he proved that, if the absolute value of the characteristic function of $Y_1+\cdots +Y_M$ satisfies a certain growth condition and if the set $\left\lbrace n \in \mathbb{N}\setminus \left\lbrace0 \right\rbrace : \left|\mathbb{E}\left( e^{2\pi i n Y_1}\right)  \right|=1  \right\rbrace $ is empty, then $(V_M)_{M\geq 1}$ converges in distribution to a uniform random variable on $[0,1]$.

        Let us now briefly introduce Benford's law and the problem of the distribution of the leading digits of products of random variables (see for instance \cite{Dia}). Benford's law in base $b>1$ is the probability measure $\mu_{b}$ on the interval $\left[ 1,b\right) $ defined by 
	\begin{align*}
	\mu_{b}\left( \left[ 1,a\right) \right) = \log_{b}a,\quad \quad \forall a\in[1,b). 
	\end{align*}
	The mantissa in base $b$ of a positive real number $x$ is the unique number $\mathcal{M}_b\left(x \right)$ in $\left[ 1,b\right)$ such that $x = \mathcal{M}_b\left(x \right)\times b^{\lfloor \log_b(x)\rfloor}$. Given a sequence of positive random variables $\left(X_i \right)_{i\geq 1}$, many researchers (\cite{shatte}, \cite{boyle}, \cite{shatte2}, \cite{ley}) were interested in studying the weak  convergence as $M\rightarrow \infty$ of the law of $\mathcal{M}_b\left(\prod\limits_{i=1}^{M}X_i \right)$  to  $\mu_{b}$. Since $\log_{b}\mathcal{M}_b\left(x\right)=\left\lbrace \log_{b}\left(x \right)  \right\rbrace$ for each positive real number $x$, one has $$\log_{b}\mathcal{M}_b\left(\prod\limits_{i=1}^{M}X_i \right)=\left\lbrace \sum_{i=1}^M\log_{b}\left(X_i \right)  \right\rbrace,$$
and this convergence is equivalent to the weak convergence of the partial sums modulo 1 of the random variables $(\log_{b}\left(X_i \right))_{i\ge 1}$ by continuity of $[1,b)\ni z\mapsto \log_b(z)$ and its inverse.\\

The introduction of a non-constant function $\phi$ in our work permits to address the choice of a data dependent mantissa for instance given by the geometric mean $\hat b_M=\exp\left(\frac{1}{\beta M}\sum_{i=1}^{\beta_M}\ln(X_i)\right)$. Indeed,
$$\log_{\hat b_M}\mathcal{M}_{\hat b_M}\left(\prod\limits_{i=1}^{M}X_i \right)=\left\lbrace \frac{\sum_{i=1}^M\ln(X_i)}{\frac 1{\beta M}\sum_{i=1}^{\beta_M}\ln(X_i)}\right\rbrace=\left\lbrace \phi\left(\frac{1}{M}\sum_{i=1}^{\beta_M}\ln(X_i)\right)\sum_{i=1}^M\ln(X_i)\right\rbrace\mbox{ where }\phi(x)=\frac 1{\beta x}.$$
Compared to previous works, the main difficulty that we have to address comes from the coupling between the variables introduced through the common factor $\phi\left(\frac{1}{M}\sum_{i=1}^{\beta_M}Y_i\right)$.  To overcome this difficulty, we assume that the $Y_i$ are i.i.d. according to a common distribution with a non-zero component absolutely continuous with respect to the Lebesgue measure. This allows us to apply one of the key results for our proof, the convergence in total variation in the central limit theorem, that we now recall. Let $F$ be a centered square-integrable random vector in $\mathbb{R}^n$ with identity covariance matrix and let $(F_i)_{i\ge 1}$ independent copies of $F$. Under the assumption that the law of $F$ has an absolutely continuous component, Prohorov \cite{prohorov} in the one-dimensional case $n=1$ and Bally and Caramellino \cite{ballycaramellino} in the multidimensional case, proved that the total variation distance between the distribution of $\frac{1}{\sqrt{M}}\sum_{i=1}^{M}F_i$ and the standard Gaussian law in $\R^n$ goes to $0$ as $M\to\infty$.
        
	The study of the joint convergence of several partial sums modulo $1$ with a common factor and an additional component satisfying a central limit theorem does not add further significant difficulties and is useful in the derivation of the central limit theorem for the stratified resampling mechanism. That is why we address the convergence in distribution of 
	
	\begin{align}\label{purpose}
	\left(\left( \left\lbrace  \phi\left(\frac{1}{M}\sum_{m=1}^{\beta^{q+1}_M}Y_m\right) \left( Y_1+\cdots+Y_{\beta_M^i}\right)\right\rbrace\right)_{1\leq i \leq q} ,\sqrt{M}\left(\phi\left(\frac{1}{M}\sum_{m=1}^{\beta^{q+1}_M}Y_m\right)\tfrac{1}{M}\sum_{i=1}^{\beta^{q+1}_M}Z_i-\theta \right) \right)
	\end{align}
for $(\beta_M^1,\cdots,\beta_M^{q+1} )_{M\geq 1}\subseteq \mathbb{N}^{q+1}$ such that  $\lim\limits_{M\rightarrow \infty}\sqrt{M}\left( \frac{\beta_M^i}{M}- \beta^i\right) =0$  with $0<\beta^1< \cdots <\beta^{q+1}$ and $\left(Z_i \right)_{i\geq 1}$ such that the sequence $((Y_i,Z_i))_{i\ge 1}$ also is i.i.d. and the last component where $\theta$ is a constant (typically equal to $\beta^{q+1}\phi\left(\beta^{q+1}\E(Y_1)\right)\E(Z_1)$) converges in distribution to $T$ as $M\to\infty$. We give mild conditions ensuring that the full vectors converge in distribution to $(U,T)$ where $U$ is uniformly distributed on $[0,1]^q$ and independent of $T$.
	\subsubsection*{Main motivation}
        Let us now come back to our main motivation. We are interested in providing a central limit theorem for the stratified resampling scheme under the simplifying assumption that the $\R^d$-valued initial drawings $(X_m)_{m\ge 1}$ are independent and identically distributed and weighted proportionally to their image by some measurable function $g:\R^d\to (0,+\infty)$. For $M\ge 1$, resampling schemes (see for instance \cite{Douc}) permit to replace the probability measure $\frac{\sum_{m=1}^Mg(X_m)\delta_{X_m}}{\sum_{m=1}^Mg(X_m)}$ with non equal weights by some empirical measure $\frac 1 M\sum_{m=1}^M\delta_{\xi_m^M}$ with the same conditional expectation given ${\cal F}=\sigma((X_m)_{m\ge 1})$.  For $(U_m)_{m\ge 1}$ an independent sequence of independent random variables uniformly distributed on $(0,1)$, the stratified resampling scheme consists in setting 
\begin{align*}
	\xi_m^M = \sum_{i=1}^{M}1_{\left\lbrace \bar{S}^M_{i-1} <m-U_m\leq \bar{S}^M_{i}\right\rbrace }X_{i}\mbox{ for }m\in\{1,\cdots,M\}\mbox{ with }\bar{S}^M_j=\frac{M\sum_{m=1}^j g(X_m)}{\sum_{m=1}^M g(X_m)}\mbox{ for }j\in\{1,\cdots,M\},
\end{align*}
under the convention $\bar{S}^M_0=0$.
For $f:\R^d\to\R$ measurable, we have
\begin{equation}
   {\mathbb E}\left(f(\xi_m^M)|{\cal F}\right)=\sum_{i=1}^Mf(X_i)\int_{m-1}^m 1_{\{\bar S_{i-1}^M<u\le \bar S^M_i\}}du.\label{espcond}
 \end{equation}
 The central limit theorem deals with the asymptotic behaviour of $\frac{1}{\sqrt{M}}\sum_{m=1}^{M}\left( f(\xi_m^{M})- \frac{\mathbb{E}\left(f\left(X_1 \right) g\left(X_1\right)  \right)}{\E(g(X_1))} \right)$ as $M\to\infty$. Let us explain how the computation of the asymptotic variance is related to our main result.
By a standard decomposition of the variance and the conditional independence of $(\xi^M_m)_{1\le m\le M}$ given ${\cal F}$,
\begin{align*}
	&\textrm{Var}\left(\dfrac{1}{\sqrt{M}}\sum_{m=1}^{M}f(\xi_m^{M})\right)= \textrm{Var}\left(\mathbb{E}\left(\dfrac{1}{\sqrt{M}}\sum_{m=1}^{M}f(\xi_m^{M})\mathrel{\Big|}{\cal F}\right) \right)+\mathbb{E}\left(\textrm{Var}\left(
	\dfrac{1}{\sqrt{M}}\sum_{m=1}^{M}f(\xi_m^{M})\mathrel{\Big|}
	{\cal F}\right)\right)   \\
	&=\textrm{Var}\left( \sqrt{M}\dfrac{\sum_{m=1}^{M}g(X_{m})f(X_{m})}{\sum_{m=1}^{M}g(X_{m})} \right)+ \mathbb{E}\left(\dfrac{\sum_{m=1}^{M}g(X_{m})f^2(X_{m})}{\sum_{m=1}^{M}g(X_{m})}\right)-\mathbb{E}\left(\frac{1}{M}\sum_{m=1}^{M} \mathbb{E}\left(f(\xi_m^{M}) \mathrel{\Big|}{\cal F} \right)^2\right).	
	\end{align*}
 Since the asymptotic behaviour as $M\to\infty$ of the first two terms in the right-hand side can be analysed using standard arguments, we focus on that of the third term. 
Using \eqref{espcond} and $\bar S_M^M=M$, we get that $\sum_{m=1}^M {\mathbb E}\left(f(\xi_m^M)|{\cal F}\right)^2$ is equal to
\begin{align*}
&\sum_{k=0}^{M-1}\left(1+1_{\{k\ge 1\}}\right)\sum_{i=1}^{M-k}f(X_i)f(X_{i+k})\sum_{m\ge \lfloor \bar S^M_{i-1}\rfloor+1}\int_{m-1}^m1_{\{\bar S_{i-1}^M<u\le \bar S^M_i\}}du
\int_{m-1}^m1_{\{\bar S_{i+k-1}^M<u\le \bar S^M_{i+k}\}}du.\end{align*}
Since $\frac 1M\sum_{i=1}^M\gamma_i=\int_0^1\gamma_{\lceil \alpha M\rceil}d\alpha$, we deduce that $\frac 1 M\sum_{m=1}^M {\mathbb E}\left(f(\xi_m^M)|{\cal F}\right)^2$ is equal to the sum over $k\in{\mathbb N}$ of
\begin{align*}
  \left(1+1_{\{k\ge 1\}}\right)\int_0^11_{\{\lceil \alpha M\rceil\le M-k\}}f(X_{\lceil \alpha M\rceil})f(X_{\lceil \alpha M\rceil+k})\psi_k\left(\left\lbrace\bar S^M_{\lceil\alpha M \rceil-1}\right\rbrace,\tfrac{Mg\left(X_{\lceil\alpha M \rceil} \right)}{\sum_{m=1}^{M}g\left(X_m \right)},\cdots,\tfrac{Mg\left(X_{\lceil\alpha M \rceil+k} \right)}{\sum_{m=1}^{M}g\left(X_m \right)} \right)d\alpha
\end{align*}
where $\psi_k(u_0,w_1,\cdots,w_{k+1})=\sum\limits_{m\geq 1}\int_{m-1}^{m}1_{\left\lbrace u_0 <u\leq u_0 +w_{1}\right\rbrace}du \int_{m-1}^{m}1_{\left\lbrace u_0+\sum\limits_{\ell=1}^{k}w_{\ell} <u\leq u_0+\sum\limits_{\ell=1}^{k+1}w_{\ell} \right\rbrace}du$.
That is why, in order to compute the asymptotic variance of $\dfrac{1}{\sqrt{M}}\sum_{m=1}^{M}f(\xi_m^{M})$, it is very useful to understand the behaviour as $M\rightarrow \infty$ of $\left\lbrace \bar S^M_{\lceil\alpha M \rceil-1}\right\rbrace$ which is equal to the first component in (\ref{purpose}) when $q=1$, $Y_i=g\left(X_i \right), \phi(x)= \frac{1}{x}$, $\beta_M^1 = \lceil\alpha M \rceil-1$ and $\beta_M^2 =M$.\\
	 
	Regarding the Central Limit Theorem, the characteristic function of $\frac{1}{\sqrt{M}}\sum_{m=1}^{M}\left( f(\xi_m^{M})- \frac{\mathbb{E}\left(f\left(X_1 \right) g\left(X_1\right)  \right)}{\E(g(X_1))} \right)$  writes for $u\in\R$
	 	\begin{align*} 
	 &\mathbb{E}\left(e^{iu\sqrt{M}\left( \frac{\sum_{m=1}^{M}g(X_{m})f(X_{m})}{\sum_{m=1}^{M}g(X_{m})} - \frac{\mathbb{E}\left(f\left(X_1 \right) g\left(X_1\right)  \right)}{\E(g(X_1))}\right) } \prod_{m=1}^{M}\mathbb{E}\left(e^{\frac{iu}{\sqrt{M}}\left(f\left( \xi_m^{M}\right)-\mathbb{E}\left(f\left( \xi_m^{M}\right) \mathrel{|} {\cal F}  \right) \right)}\mathrel{\Big|}{\cal F}\right)\right). 
	 \end{align*}
         Using some Taylor expansion of $e^{\frac{iu}{\sqrt{M}}\left(f\left( \xi_m^{M}\right)-\mathbb{E}\left(f\left( \xi_m^{M}\right) \mathrel{|} {\cal F}  \right) \right)}$, one may approximate the product by a sum of integrals of functions of $$\left(\left\lbrace\bar S^M_{\lceil\alpha_1 M \rceil-1}\right\rbrace,\left\lbrace\bar S^M_{\lceil\alpha_2 M \rceil-1}\right\rbrace,\cdots,\left\lbrace\bar S^M_{\lceil\alpha_q M \rceil-1}\right\rbrace\right)$$ with respect to $1_{\{0<\alpha_1<\alpha_2<\cdots<\alpha_q<1\}}d\alpha_1d\alpha_2\cdots d\alpha_q$. This closely relates the asymptotic behaviour of the characteristic function to the one of the vectors $(\ref{purpose})$ for the choice  $Y_i=g(X_i)$, $\phi(x)=\frac 1 x$, $Z_i=g(X_i)f(X_i)$, $\beta^i_M=\lceil\alpha_i M \rceil-1$ for $i\in\{1,\cdots,q\}$ and $\beta^{q+1}_M=M$. In the companion paper \cite{echant}, using the main result of the present paper, we compute the asymptotic variance and prove the associated central limit theorem. 
	\section{Notation}\label{notation}
	We denote by $\mathbb{N}^*$ the set of natural numbers without $0$ and by $\mathbb{R}_+$  the set of non negative real numbers. 
	We denote by $\floor*{x}$ the integer $j$ such that $j\leq x < j+1$ and by $\{x\}=x-\floor*{x}$ the fractional part of $x\in \mathbb{R}$.\\ 
	We denote by $\mu_Y$ the law of a $\mathbb{R}^d$-valued random vector $Y=\left(Y^1,\cdots,Y^d\right)$ where $d\in \mathbb{N}^*$ and by $\phi_{Y}$ its characteristic function given by $\phi_{Y}(u)=\mathbb{E}(e^{iu^TY})=\int_{\mathbb{R}^d}e^{iu^Ty}\mu_{Y}(dy)$, $u\in\mathbb{R}^d$. The convergence in distribution is denoted by $\overset{d}{\Longrightarrow}$.
	Moreover let $ \mu_Y = \mu_{Y,c} + \mu_{Y,s}$ denote the decomposition of $\mu_Y$ into a part $\mu_{Y,c}$ absolutely continuous with respect to
	the Lebesgue measure and a singular part $\mu_{Y,s}$. Notice that there exists  $A$, a Borel subset of $\mathbb{R}^d$, such that $\mu_{Y,s}(A)=0$ and $\int_{\mathbb{R}^d}^{} 1_{\left\lbrace x\notin A\right\rbrace } dx = 0$. Let $p_Y$ denote a density of  $\mu_{Y,c}$ with respect to the Lebesgue measure. In what follows, we will always consider absolute continuity with respect to the Lebesgue measure and so we avoid to write it everytime. Moreover, when we write that the law of a random variable has an absolutely continuous component we mean a non zero absolutely continuous component.
	We denote by $m_Y=\left(\mathbb{E}\left(Y^1 \right),\cdots,\mathbb{E}\left(Y^d \right)  \right)$ the expected value vector of $Y$.\\
	For the total variation distance between the measures $\mu_1$ and $\mu_2$, we write

	\begin{align}
	d_{TV}\left(\mu_1,\mu_2 \right)&=\sup_{A\in \mathcal{B}\left(\mathbb{R}^d \right) }\left|\mu_1(A)-\mu_2(A) \right|=\frac{1}{2}\sup_{\left\|f\right\|_{\infty,\mathbb{R}}\leq 1}\left|\int_{\mathbb{R}^d}f(x)\left( \mu_1(dx)-\mu_2(dx) \right) \right|\label{dtvr}\\ 
	&= \frac{1}{2}\sup_{\left\|f\right\|_{\infty,\mathbb{C}}\leq 1}\left|\int_{\mathbb{R}^d}f(x)\left( \mu_1(dx)-\mu_2(dx) \right) \right| \label{dtvc}
	\end{align}
	where the first supremum is taken over the real-valued measurable functions and the second one is taken over the complex-valued measurable functions.
	The latter formulation is less usual so we will provide the proof of the third equality in the Appendix (see Lemma \ref{tvdistancecomplex}).\\
	Given two $\mathbb{R}^d$-valued random vectors $X$ and $Y$ and a measurable function $g:\mathbb{R}^d\rightarrow \mathbb{R}^{d'}$ where $d'\in \mathbb{N}^*$, the following inequality holds
	
	\begin{equation}\label{pushforward!}	d_{TV}(\mu_{g(X)},\mu_{g(Y)}) \leq d_{TV}(\mu_{X},\mu_{Y}).
	\end{equation}
	In particular,
	\begin{equation}\label{proptv}
	d_{TV}(\mu_{(0,X)},\mu_{(0,Y)}) = d_{TV}(\mu_{X},\mu_{Y})
	\end{equation}
	and given $A\in\mathbb{R}^{d\times d}$ an invertible matrix
	\begin{equation}\label{proptv2}
	d_{TV}(\mu_{AX},\mu_{AY}) = d_{TV}(\mu_{X},\mu_{Y}).
	\end{equation}
	Additionally we introduce the following notation: 
	given a real sequence $(x_j)_{j\geq 1}$ and a real number $c\in \mathbb{R}$ we define for any integer $M\geq 1$ 
	$$\bar{x}_M^c:= \frac{x_1+\cdots+x_M+c}{M} .$$ 
	
	\section{Main Result}
	
	Let $\left(Y_i,Z_i\right)_{i\geq 1}$ be a sequence of square-integrable i.i.d. random vectors in $\mathbb{R}^2$ where $Y_i$ is not constant. Moreover let $q\in \mathbb{N}^*$ and consider a sequence of vectors of integers $(\beta_M^1,\cdots,\beta_M^{q+1} )_{M\geq 1}\subseteq \mathbb{N}^{q+1}$ such that  $$\lim\limits_{M\rightarrow \infty}\sqrt{M}\left( \frac{\beta_M^i}{M}- \beta^i\right) =0$$  with $$0<\beta^1< \cdots <\beta^{q+1}.$$
	Let us observe that for $M$ big enough $0<\beta_M^1<\cdots<\beta_M^{q+1}$.
	Given $(x,z,y_1,\cdots,y_q)\in \mathbb{R}^{q+2}$ and $\phi$  a  measurable real-valued function, we are interested in studying the convergence in distribution as $M\rightarrow\infty$ of the following random vector
	
	\begin{equation}\label{vector_to_study2}
	\left( \left\lbrace R_{\beta_M}^{1,y_1}\right\rbrace ,\cdots,\left\lbrace R_{\beta_M}^{q,y_q}\right\rbrace ,\sqrt{M}\left(\phi\left( \frac{\beta^{q+1}_M}{M}\overline{Y}_{\beta_M^{q+1}}^{x}\right)\times\frac{\beta^{q+1}_M}{M} \overline{Z}_{\beta_M^{q+1}}^z- \theta\right)\right) 
	\end{equation}
	where
	\begin{equation} \label{R}
	R_{\beta_M}^{i,y_i}:= \phi\left(\frac{\beta_M^{q+1}}{M}\overline{Y}_{\beta_M^{q+1}}^{x}\right) \left( Y_1+\cdots+Y_{\beta_M^i}+y_i\right)
	\end{equation}

	and  \begin{equation} \label{theta}\theta= \phi\left( \beta^{q+1}m_Y\right)\beta^{q+1}m_Z.
	\end{equation}
	
	\begin{remark}\label{deltamethod}
		Let us observe that  if $\phi$ is differentiable at $\beta^{q+1}m_Y$, the application of the delta method provides the convergence in distribution of the last component of \eqref{vector_to_study2}: 
		\begin{align}\label{tcl}
		\sqrt{M}\left(\phi\left( \frac{\beta^{q+1}_M}{M}\overline{Y}_{\beta_M^{q+1}}^{x}\right)\times\frac{\beta^{q+1}_M}{M} \overline{Z}_{\beta_M^{q+1}}^z- \theta\right) \overset{d}{\Longrightarrow} T \sim \mathcal{N}\left(0,\sigma^2_{T} \right)
		\end{align}
		where $\sigma^2_{T}=\beta^{q+1} \left(\phi'(\beta^ {q+1}m_Y)\beta^ {q+1}m_Z,\phi(\beta^ {q+1}m_Y)\right)^T \Sigma_{(Y_1,Z_1)}\left(\phi'(\beta^ {q+1}m_Y)\beta^ {q+1}m_Z,\phi(\beta^ {q+1}m_Y)\right)$ being $\Sigma_{(Y_1,Z_1)}$ the covariance matrix of $(Y_1,Z_1)$.
	\end{remark}

	We are now ready to state the main result of this work.
	
	\begin{teo} \label{mainresult}
		Let $\left(Y_i,Z_i\right)_{i\geq 1}$ be a sequence of square-integrable i.i.d. random vectors in $\mathbb{R}^2$ such that the law of $Y_i$ has an absolutely continuous component.
		Moreover let  $(x,z,y_1,\cdots,y_q)\in \mathbb{R}^{q+2}$ and let $\phi:\mathbb{R}\rightarrow \mathbb{R}$ be a measurable function  differentiable at $\beta^{q+1}m_{Y}$  such that $\phi(\beta^{q+1}m_Y)\neq 0$. 
		If there exists $ \tilde{M}\in \mathbb{N}$ such that 
		\begin{equation}\label{dominateconvergence1}	
		\int_{\mathbb{R}}\frac{1}{\left|\phi(\beta^{q+1}m_Y+y) \right|^{q+1}}e^{-\tilde{M}  y^2}dy < \infty,
		\end{equation}
		then the random vector 
		
		$$
		\left(\left\lbrace R_{\beta_M}^{1,y_1}\right\rbrace ,\cdots,\left\lbrace R_{\beta_M}^{q,y_q} \right\rbrace,\sqrt{M}\left(\phi\left(\frac{\beta^{q+1}_M}{M} \overline{Y}_{\beta_M^{q+1}}^{x}\right) \times \frac{\beta^{q+1}_M}{M} \overline{Z}_{\beta_M^{q+1}}^z- \theta\right)\right)$$ 
		converges in distribution as $M\rightarrow \infty$ to $\left(U,T \right) $ where $T$ has been introduced in \eqref{tcl} and where $U$ is a uniform random variable on $\left[ 0,1\right]^{q}$ independent of $T$.
		
	\end{teo}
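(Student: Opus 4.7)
My strategy is to verify the Weyl--L\'evy criterion: by L\'evy's continuity theorem combined with Weyl's equidistribution criterion, the announced convergence of the random vector to $(U,T)$ with $U\sim\mathrm{Unif}([0,1]^q)$ independent of $T$ is equivalent to
\begin{equation}
\mathbb{E}\Bigl[\exp\Bigl(2\pi i \sum_{j=1}^q n_j R^{j,y_j}_{\beta_M} + iu\Psi_M\Bigr)\Bigr] \xrightarrow[M\to\infty]{} \mathbf{1}_{\{n=0\}}\,\mathbb{E}[e^{iuT}]
\label{weylplan}
\end{equation}
for every $(n,u)\in\mathbb{Z}^q\times\mathbb{R}$, where $\Psi_M$ denotes the last entry of \eqref{vector_to_study2}. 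The case $n=0$ is covered by Remark~\ref{deltamethod}, so the heart of the matter is the vanishing of the expectation for each $n\neq 0$.

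Fix $n\neq 0$ and split the sequence $(Y_1,\dots,Y_{\beta_M^{q+1}})$ into $q+1$ independent blocks $a_k=\sum_{j\in(\beta_M^k,\beta_M^{k+1}]}Y_j$ with $\beta_M^0=0$. Writing $S_M^i=\sum_{k<i}a_k$, $S_M=\sum_{k\le q}a_k$ and setting $N_k=\sum_{i>k}n_i$, one has $\sum_i n_i S_M^i=\sum_{k=0}^{q-1}N_k a_k$ with $(N_0,\dots,N_{q-1})\neq 0$. All the coupling in \eqref{weylplan} then flows through $\phi_M=\phi\bigl(\tfrac{S_M+x}{M}\bigr)$, a function of $\sum_k a_k$ alone. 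Since the law of $Y_1$ has an absolutely continuous component, so does that of each $a_k$; Prohorov's univariate TV--CLT \cite{prohorov} gives $d_{TV}(\mathrm{Law}(a_k),\mathcal{N}(\mu_k^M,(\sigma_k^M)^2))\to 0$, and independence of the blocks plus the triangle inequality for TV distance between product measures promotes this to the joint statement. By \eqref{dtvc}, replacing $(a_k)_{k\le q}$ in \eqref{weylplan} by a vector $(\tilde a_k)_{k\le q}$ of independent Gaussians with matching moments introduces an error bounded by twice the TV distance, hence $o(1)$.

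On the resulting Gaussian model, condition on $\tilde S_M=\sum_k\tilde a_k$: the linear form $L=\sum_{k<q}N_k\tilde a_k$ is conditionally Gaussian with variance $v_M^2(n)$ of order $M$, strictly positive because a degeneracy would force $(N_0,\dots,N_{q-1},0)$ to be collinear with $(1,\dots,1)$, which is impossible for $n\neq 0$. Hence
\[
\bigl|\mathbb{E}[e^{2\pi i \phi_M L}\mid \tilde S_M=s]\bigr|=\exp\bigl(-2\pi^2\phi(s/M+x/M)^2\, v_M^2(n)\bigr),
\]
and averaging this against the Gaussian marginal of $\tilde S_M$ with the change of variable $y=s/M-\beta^{q+1}m_Y$ reduces the Gaussianised version $\tilde{\mathcal{E}}_M$ of the left-hand side of \eqref{weylplan} to a bound of the shape
\[
|\tilde{\mathcal{E}}_M|\le C\sqrt{M}\int_{\mathbb R} \exp\bigl(-cM\phi(\beta^{q+1}m_Y+y+x/M)^2-c' M y^2\bigr)\,dy,
\]
multiplied by a uniformly bounded $Z$-dependent factor encoding $e^{iu\Psi_M}$ (the $Z_i$'s being independent of the $Y_i$'s, their contribution is handled by a separate delta-method argument as in Remark~\ref{deltamethod} and does not spoil the decay).

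The main obstacle is to show this integral tends to $0$ in spite of the possibly erratic behaviour of $\phi$ far from $\beta^{q+1}m_Y$. Near $y=0$, differentiability of $\phi$ at $\beta^{q+1}m_Y$ together with $\phi(\beta^{q+1}m_Y)\neq 0$ force $|\phi|\ge c_0>0$ on a neighbourhood, so the integrand is locally dominated by $e^{-c_0^2 cM}$; outside this neighbourhood the Gaussian factor $e^{-c'My^2}$ supplies decay but an integrable envelope is needed to sweep the regions where $\phi$ can be very small. Hypothesis \eqref{dominateconvergence1} provides precisely such an envelope: for $M\ge \tilde M$ the integrand is dominated by a constant multiple of $|\phi(\beta^{q+1}m_Y+y)|^{-(q+1)}e^{-\tilde M y^2}\in L^1(\mathbb R)$, and dominated convergence then delivers the required vanishing. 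The exponent $q+1$ matches the $q$ oscillatory block integrations together with one additional factor to absorb the $Z$-dependent multiplier. This establishes \eqref{weylplan} for $n\neq 0$ and completes the verification of the theorem.
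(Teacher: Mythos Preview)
Your overall architecture (Weyl criterion, block decomposition, Gaussian replacement via a TV--CLT, then a pointwise decay estimate controlled by hypothesis~\eqref{dominateconvergence1}) is close in spirit to the paper's, but there is a genuine gap that prevents the argument from going through.

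The problematic step is the parenthetical assertion ``the $Z_i$'s being independent of the $Y_i$'s''. Nothing in the hypotheses guarantees this: the theorem only assumes that the pairs $(Y_i,Z_i)$ are i.i.d.\ and that the marginal of $Y_i$ has an absolutely continuous component. Your conditional-characteristic-function computation relies on the conditional law of $L=\sum_k N_k\tilde a_k$ given $\tilde S_M$ \emph{and} the $Z$-data being the same as given $\tilde S_M$ alone; without $Y\!\perp\! Z$ this fails, and you cannot isolate a ``uniformly bounded $Z$-dependent factor'' from $e^{iu\Psi_M}$, since $\Psi_M$ itself depends on the $Y$'s through $\phi_M$. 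More fundamentally, applying Prohorov's \emph{one-dimensional} TV--CLT to the $Y$-block sums only controls their marginal law; it says nothing about the joint law of the $Y$-blocks and the $Z$-blocks, which is exactly what the expectation in \eqref{weylplan} involves. Replacing the $a_k$'s by independent Gaussians while keeping the original $Z$'s is therefore not justified by a TV bound.

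The paper resolves this coupling in two moves you are missing. First, under the \emph{stronger} assumption that $(Y_i,Z_i)$ has a two-dimensional absolutely continuous component, it applies the Bally--Caramellino multidimensional TV--CLT to the joint $(Y,Z)$ block sums, and then proves a full TV convergence of the entire rescaled vector $\bigl(\tfrac{R_M-\text{center}}{\sqrt M},K_M\bigr)$ to a nondegenerate Gaussian (this is where \eqref{dominateconvergence1} enters, via a change-of-variables Jacobian producing the exponent $q+1$); the Weyl criterion then follows from Riemann--Lebesgue rather than an explicit conditional Gaussian bound. Second, to pass from the stronger assumption to the stated one (only $Y_i$ has an a.c.\ component), it perturbs $Z_i$ to $Z_i+\xi_i/n$ with $\xi_i$ independent and absolutely continuous, shows this forces a two-dimensional a.c.\ component, applies the first step, and lets $n\to\infty$ using tightness. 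Your sketch omits both the need for the two-dimensional TV--CLT and this perturbation/limiting argument. The treatment of the shifts $(z,y_1,\dots,y_q)$ is also not addressed in your proposal; the paper handles these by a separate Slutsky/continuous-mapping step after the case $(z,y_1,\dots,y_q)=0$ is settled.
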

	\begin{remark} \label{notcl}
		Let us observe that we can apply Theorem \ref{mainresult} to $\left(Y_i,0 \right)_{i\geq 1}$ where $\left(Y_i \right)_{i\geq 1}$ is a sequence of square-integrable i.i.d. real-valued random variables such that the law of $Y_i$ has an absolutely continuous component. In particular we have
		$$
		\left(\left\lbrace R_{\beta_M}^{1,y_1}\right\rbrace ,\cdots,\left\lbrace R_{\beta_M}^{q,y_q} \right\rbrace\right) \overset{d}{\Longrightarrow} U$$ 
		where $U$ is a uniform random variable on $\left[ 0,1\right]^{q}$. It is possible to prove that in this case the hypothesis \eqref{dominateconvergence1} can be replaced by the following slightly weaker hypothesis: $ \exists\tilde{M}\in \mathbb{N}$ such that 
		\begin{equation}	
		\int_{\mathbb{R}}\frac{1}{\left|\phi(\beta^{q+1}m_Y+y) \right|^{q}}e^{-\tilde{M}  y^2}dy < \infty.
		\end{equation}
	\end{remark}

	We are now going to provide the statement of Theorem \ref{mainresult} in the particular case where the law of $\left(Y_1,Z_1\right)$  has an absolutely continuous component and $(z,y_1,\cdots,y_q)= \left(0,\cdots,0 \right) $.  This, together with the lemma that immediately follows, will allow to prove Theorem \ref{mainresult}. 
	
	\begin{prop}\label{tcl_general}
		Theorem \ref{mainresult} holds under the reinforced hypotheses that the law of $\left(Y_i,Z_i\right)$ has an absolutely continuous component and $(z,y_1,\cdots,y_q)= \left(0,\cdots,0 \right) $. 
	\end{prop}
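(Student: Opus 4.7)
The plan is to establish joint convergence in distribution via convergence of joint characteristic functions. By L\'evy's continuity theorem, the identity $e^{2\pi i k_j \{R\}} = e^{2\pi i k_j R}$ for $k_j \in \mathbb{Z}$, and the fact that $(U,T)$ with $U \sim \mathrm{Unif}([0,1]^q)$ independent of $T$ is characterized by $\mathbb{E}[e^{2\pi i k\cdot U + iuT}] = \mathbf{1}_{\{k=0\}}\mathbb{E}[e^{iuT}]$ for $(k,u) \in \mathbb{Z}^q \times \mathbb{R}$, the proposition reduces to proving
\[
\Phi_M(k,u) := \mathbb{E}\!\left[\exp\!\left(iu V_M + 2\pi i \sum_{j=1}^q k_j R_{\beta_M}^{j,0}\right)\right] \xrightarrow[M \to \infty]{} \mathbf{1}_{\{k=0\}}\, \mathbb{E}[e^{iuT}]
\]
for every $(k,u) \in \mathbb{Z}^q \times \mathbb{R}$, where $V_M$ denotes the last component of \eqref{vector_to_study2}. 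The case $k=0$ is exactly Remark~\ref{deltamethod}; everything below treats $k \in \mathbb{Z}^q\setminus\{0\}$.

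For such $k$, I would decompose the sums over the $q+1$ independent blocks: with $\beta_M^0 = 0$ and $I_\ell = \{\beta_M^{\ell-1}+1,\dots,\beta_M^\ell\}$ for $\ell = 1,\dots,q+1$, set $\Delta_\ell^Y = \sum_{i \in I_\ell} Y_i$, $\Delta_\ell^Z = \sum_{i \in I_\ell} Z_i$ and $n_\ell = \beta_M^\ell - \beta_M^{\ell-1}$, so that the pairs $(\Delta_\ell^Y,\Delta_\ell^Z)_{\ell=1}^{q+1}$ are independent. Abel summation gives $\sum_{j=1}^q k_j (Y_1+\cdots+Y_{\beta_M^j}) = \sum_{\ell=1}^q A_\ell \Delta_\ell^Y$ with $A_\ell = k_\ell+\cdots+k_q$, and $\phi_M$ depends only on $\sum_\ell \Delta_\ell^Y$. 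The integrand of $\Phi_M(k,u)$ is therefore a function of modulus one of the centered normalized block vector
\[
\mathbf{W}_M = \bigl((\Delta_\ell^Y - n_\ell m_Y)/\sqrt{M},\; (\Delta_\ell^Z - n_\ell m_Z)/\sqrt{M}\bigr)_{\ell=1}^{q+1}.
\]
Under the reinforced hypothesis that the law of $(Y,Z)$ has an AC component, applying the Prohorov--Bally--Caramellino total variation CLT to each independent block yields $d_{TV}(\mathbf{W}_M, \mathbf{W}^\infty) \to 0$ for a centered non-degenerate Gaussian $\mathbf{W}^\infty$; combined with formulation \eqref{dtvc}, this lets one replace the law of $\mathbf{W}_M$ by that of $\mathbf{W}^\infty$ inside $\Phi_M(k,u)$ up to an $o(1)$ error.

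It remains to show that the Gaussian expectation obtained after substitution tends to $0$. Using the differentiability of $\phi$ at $\beta^{q+1}m_Y$ and $c := \phi(\beta^{q+1}m_Y) \neq 0$, a formal linearization $\phi_M = c + O(M^{-1/2})$ writes the phase as $\sqrt{M}\,\mathcal{L}(\mathbf{W}^\infty) + \mathcal{R}_M(\mathbf{W}^\infty)$, with $\mathcal{L}$ a fixed linear form on $\mathbf{W}^\infty$ and $\mathcal{R}_M$ a bounded remainder. A direct algebraic check, using the independence of the blocks, the explicit form of the $A_\ell$ and $c \neq 0$, shows that whenever $k \neq 0$ the coefficients of $\mathcal{L}$ in the independent components cannot all vanish; hence $\mathcal{L}(\mathbf{W}^\infty)$ is a non-degenerate Gaussian whose characteristic function at argument $\sqrt{M}$ decays exponentially, driving $\Phi_M(k,u) \to 0$.

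The chief difficulty is to make the above linearization rigorous with only pointwise differentiability of $\phi$: the remainder $(\phi_M - c)S_M^j$ is intrinsically of size $\sqrt{M}$ and is not uniformly small on the support of $\mathbf{W}^\infty$. This is where hypothesis \eqref{dominateconvergence1} enters. I expect to perform a change of variables from the block sums to the natural coordinates $(R_{\beta_M}^{j,0})_{j=1}^q$ and $V_M$, whose Jacobian carries a factor $|\phi_M|^{q+1}$ (one $\phi$ factor per transformed coordinate, accounting for the $q$ partial sums multiplied by $\phi_M$ and the $\phi_M$ hidden in $V_M$), producing a factor $1/|\phi|^{q+1}$ in the integrand against a Gaussian weight; \eqref{dominateconvergence1} then furnishes the $L^1$ dominator required to pass to the limit by dominated convergence. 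I anticipate this Jacobian and domination step to be the main technical obstacle of the proof.
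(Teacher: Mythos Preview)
Your plan coincides with the paper's in all key ingredients: the Weyl-type reduction to $\Phi_M(k,u)\to 0$ for $k\neq 0$, the block decomposition with the Bally--Caramellino total-variation CLT on each block, the substitution of a Gaussian for $\mathbf W_M$, and the change of variables whose Jacobian carries the factor $|\phi|^{q+1}$ controlled by hypothesis~\eqref{dominateconvergence1}.

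The one place the paper is cleaner is the endgame. Instead of attempting a phase linearization (which, as you rightly note, cannot be made rigorous with only pointwise differentiability), the paper uses the change of variables to prove directly that the density of the \emph{centered and rescaled} vector
\[
\left(\frac{R_M - m_Y\,\phi(\beta^{q+1}m_Y)\,M\gamma}{\sqrt M},\,K_M\right)
\]
converges pointwise---and hence, by a Scheff\'e-type argument (Lemma~\ref{decomposition}), in $L^1$ and in total variation---to that of a non-degenerate Gaussian $\mathcal N(0,\Gamma)$ (this is Lemma~\ref{computationTV}; its proof is precisely the Jacobian-plus-domination computation you describe, and it is the only place \eqref{dominateconvergence1} is used). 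Once this total-variation convergence (Proposition~\ref{asymptoticRn}) is in hand, the conclusion is immediate: $|\Phi_M(k,u)|$ equals, up to a unimodular factor, the characteristic function of the normalized vector at the diverging frequency $(\sqrt M\cdot 2\pi k,u)$, hence is bounded by $2\,d_{TV}+\bigl|\phi_{\mathcal N(0,\Gamma)}(\sqrt M\cdot 2\pi k,u)\bigr|$, and the second term vanishes by Riemann--Lebesgue. So your change of variables should target the \emph{normalized} coordinates rather than $(R_{\beta_M}^{j,0})_j$ themselves, and its output---$L^1$ convergence of densities to a non-degenerate Gaussian---makes the linearization paragraph unnecessary.
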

	
	\begin{lemma}\label{abs_continuity}
		Let $Y$ and $Z$ be two real-valued random variables such that the law of $Y$ has an absolutely continuous component. If $\xi$ is an absolutely continuous real-valued random variable independent of $\left(Y,Z \right)$, the law of $\left(Y,Z+ \xi\right) $ has an absolutely continuous component.
	\end{lemma}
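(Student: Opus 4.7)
The approach is to exhibit an explicit non-zero absolutely continuous sub-measure of the law of $(Y, Z+\xi)$ on $\R^2$. The key intuition is that the convolution in the second coordinate produced by the addition of the independent density $p_\xi$ regularises the second marginal uniformly in $y$, so the absolute continuity already present in $\mu_Y$ for the first coordinate lifts to a two-dimensional absolutely continuous component of the joint law.

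Concretely, since we work on $\R^2$ the disintegration theorem yields a regular conditional kernel $K(y,dz)$ with $\mu_{(Y,Z)}(dy,dz) = \mu_Y(dy)\,K(y,dz)$. I would decompose $\mu_Y = \mu_{Y,c} + \mu_{Y,s}$ with $\mu_{Y,c}$ the (by hypothesis, non-zero) absolutely continuous part having density $p_Y$. Using the independence of $\xi$ from $(Y,Z)$, writing
$$\E[f(Y,Z+\xi)] = \int_\R\int_\R\int_\R f(y, z+u)\, p_\xi(u)\,du\, K(y,dz)\,\mu_Y(dy)$$
for any bounded measurable $f:\R^2\to\R$, then applying the change of variable $v = z+u$ in the innermost integral and Fubini, one rewrites this expectation as
$$\int_\R\int_\R f(y, v)\, q_y(v)\, dv\, \mu_Y(dy), \qquad q_y(v) := \int_\R p_\xi(v-z)\, K(y,dz),$$
where for each $y$, $q_y$ is a probability density on $\R$ (integrating to one by Fubini since $K(y,\cdot)$ is a probability measure and $p_\xi$ is a density).

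Splitting $\mu_Y$ into its absolutely continuous and singular parts then gives the decomposition $\mu_{(Y, Z+\xi)} = \nu_1 + \nu_2$ with
$$\nu_1(dy,dv) = p_Y(y)\,q_y(v)\,dy\,dv, \qquad \nu_2(dy,dv) = \mu_{Y,s}(dy)\,q_y(v)\,dv,$$
both non-negative; the measure $\nu_1$ is absolutely continuous with respect to Lebesgue measure on $\R^2$, and its total mass equals $\int_\R p_Y(y)\,dy = \mu_{Y,c}(\R) > 0$ by hypothesis. Hence $\nu_1$ is a non-zero absolutely continuous sub-measure of $\mu_{(Y,Z+\xi)}$, establishing the claim.

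The only technical points are the existence of the regular conditional kernel $K$ (automatic on $\R^2$) and an application of Fubini to justify the change of variable, neither of which poses a real obstacle; the whole argument is essentially a clean bookkeeping of where absolute continuity lives after conditioning and convolving.
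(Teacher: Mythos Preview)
Your proof is correct and follows essentially the same approach as the paper: disintegrate $\mu_{(Y,Z)}$ via a conditional kernel, use independence of $\xi$ to introduce the density $p_\xi$, change variables in the convolution, and split $\mu_Y$ into its absolutely continuous and singular parts to exhibit the non-zero absolutely continuous sub-measure $p_Y(y)\,q_y(v)\,dy\,dv$. The only cosmetic difference is notation (your kernel $K(y,dz)$ is the paper's $\mu_{Z\mid Y=y}(dz)$) and that you explicitly record $q_y$ as a probability density, which the paper leaves implicit.
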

	We provide the proof of Proposition \ref{tcl_general} in Section \ref{prooflemma} and the proof of Lemma \ref{abs_continuity} in the Appendix.
	
	\begin{proof}[Proof of Theorem \ref{mainresult}] The proof consists of two steps. In step \textit{(i)} we are going to check that the conclusion still holds when the hypothesis of existence of an absolutely continuous component for $\left(Y_1,Z_1 \right)$ made in Proposition \ref{tcl_general} is weakened to the existence of an absolutely continuous component for $Y_1$. Moreover we suppose that  $(z,y_1,\cdots,y_q)= \left(0,\cdots,0 \right) $. In step \textit{(ii)} we deal with the case when the vector $(z,y_1,\cdots,y_q)\neq \left(0,\cdots,0 \right) $ . \\ \textit{(i)} 
		To simplify the notation in what follows we write $ R_{\beta_M}^{i}$ instead of $ R_{\beta_M}^{i,0}$ and $\overline{Z}_{\beta_M^{q+1}}$ instead of $\overline{Z}_{\beta_M^{q+1}}^0$.\\  
		Let $\left(\xi_i \right)_{i\geq 1}$ be a sequence of zero-mean absolutely continuous square-integrable i.i.d. real-valued random variables independent of $\left(Y_i,Z_i \right)_{i\geq 1} $. For each $n\geq 1$, let $\xi_i^n=\frac{\xi_i}{n}$ and let us consider the sequence $\left(Y_i,Z_i+\xi_i^n \right)_{i\geq 1}$ of square-integrable i.i.d. random vectors in $\mathbb{R}^2$ such that by Lemma \ref{abs_continuity} the law of $\left(Y_i,Z_i+\xi_i^n \right)$ has an absolutely continuous component.  
		Thus we can apply Proposition \ref{tcl_general} and obtain that for each $n\geq 1$ the random vector 
		
		\begin{align}\label{conv-dis}
		\left(\left\lbrace R_{\beta_M}^{1}\right\rbrace ,\cdots,\left\lbrace R_{\beta_M}^{q} \right\rbrace,\sqrt{M}\left(\phi\left(\frac{\beta^{q+1}_M}{M} \overline{Y}_{\beta_M^{q+1}}^{x}\right) \times \frac{\beta^{q+1}_M}{M} \left( \overline{Z}_{\beta_M^{q+1}}+\overline{\xi^n}_{\beta_M^{q+1}}\right) - \theta\right)\right) \overset{d}{\Longrightarrow} \left(U,T_{n} \right)
		\end{align} 
		
		where $\theta=\phi\left( \beta^{q+1}m_Y\right)\beta^{q+1}m_Z$, $U$ is a uniform random variable on $\left[ 0,1\right]^{q}$ independent of $T_{n}$ and by Remark \ref{deltamethod}, $T_{n}\sim \mathcal{N}\left(0,\sigma^2_{T_{n}} \right)$ with 
		\begin{align*}
		\sigma^2_{T_{n}}&= \beta^{q+1} \left(\phi'(\beta^ {q+1}m_Y)\beta^ {q+1}m_Z,\phi(\beta^ {q+1}m_Y)\right)^T \Sigma_{(Y_1,Z_1+\xi_1^n)}\left(\phi'(\beta^ {q+1}m_Y)\beta^ {q+1}m_Z,\phi(\beta^ {q+1}m_Y)\right).
		\end{align*}
		Since $(Y_1,Z_1+\xi_1^n)$ converges in $L^2$ to $(Y_1,Z_1)$ as $n$ goes to infinity, $\Sigma_{(Y_1,Z_1+\xi_1^n)}\underset{n\rightarrow \infty}{\longrightarrow}\Sigma_{(Y_1,Z_1)}$ and $\sigma^2_{T_{n}}\underset{n\rightarrow \infty}{\longrightarrow} \sigma^2_{T} = \beta^{q+1} \left(\phi'(\beta^ {q+1}m_Y)\beta^ {q+1}m_Z,\phi(\beta^ {q+1}m_Y)\right)^T \Sigma_{(Y_1,Z_1)}\left(\phi'(\beta^ {q+1}m_Y)\beta^ {q+1}m_Z,\phi(\beta^ {q+1}m_Y)\right).$
		\\ Let us now prove that this implies the convergence in distribution as $M\rightarrow \infty$ of (\ref{vector_to_study2}) to $\left(U,T \right)$ where $T\sim \mathcal{N}\left(0, \sigma^2_{T}\right)  $ independent of $U= \left(U_1,\cdots,U_q \right) $.\\
		Let $\left(u_1,u_2 \right)\in \mathbb{R}^{q}\times  \mathbb{R}$ and let us denote the random vector $\left(\left\lbrace R_{\beta_M}^{1}\right\rbrace,\cdots, \left\lbrace R_{\beta_M}^{q}\right\rbrace\right)$ by $\left\lbrace R_{\beta_M}^{i}\right\rbrace_{i\leq q}$. For $ n\geq 1$  one has 
		
		\begin{align*}
		&\left| \mathbb{E}\left( e^{iu_1^T\left(\left\lbrace R_{\beta_M}^{i}\right\rbrace_{i \leq q} \right) +iu_2\sqrt{M}\left(\phi\left(\frac{\beta^{q+1}_M}{M} \overline{Y}_{\beta_M^{q+1}}^{x}\right) \times \frac{\beta^{q+1}_M}{M}  \overline{Z}_{\beta_M^{q+1}} - \theta\right) }\right)-\mathbb{E}\left(e^{iu_1^TU} \right) e^{-\frac{1}{2}\sigma^2_{T}u_2^2} \right|\\
		& \leq  \left| \mathbb{E}\left( e^{iu_1^T\left(\left\lbrace R_{\beta_M}^{i}\right\rbrace_{ i \leq q} \right)}e^{iu_2\sqrt{M}\left(\phi\left(\frac{\beta^{q+1}_M}{M} \overline{Y}_{\beta_M^{q+1}}^{x}\right) \times \frac{\beta^{q+1}_M}{M}  \overline{Z}_{\beta_M^{q+1}} - \theta\right) }\left(1-e^{iu_2\sqrt{M}\phi\left(\frac{\beta^{q+1}_M}{M} \overline{Y}_{\beta_M^{q+1}}^{x}\right) \frac{\beta^{q+1}_M}{M}  \overline{\xi^n}_{\beta_M^{q+1}} } \right) \right)  \right| \\
		&\phantom{==}+\left| \mathbb{E}\left( e^{iu_1^T\left(\left\lbrace R_{\beta_M}^{i}\right\rbrace_{ i \leq q} \right) +iu_2\sqrt{M}\left(\phi\left(\frac{\beta^{q+1}_M}{M} \overline{Y}_{\beta_M^{q+1}}^{x}\right) \times \frac{\beta^{q+1}_M}{M} \left( \overline{Z}_{\beta_M^{q+1}}+\overline{\xi^n}_{\beta_M^{q+1}}\right) - \theta\right) }\right)-\mathbb{E}\left(e^{iu_1^TU} \right) e^{-\frac{1}{2}\sigma^2_{T_{n}}u_2^2} \right|\\
		&\phantom{==}+\left|\mathbb{E}\left(e^{iu_1^TU} \right) e^{-\frac{1}{2}\sigma^2_{T_{n}}u_2^2}-\mathbb{E}\left(e^{iu_1^TU} \right) e^{-\frac{1}{2}\sigma^2_{T}u_2^2} \right|\\
		&\leq \mathbb{E}\left( \left|1- e^{iu_2\sqrt{M}\phi\left(\frac{\beta^{q+1}_M}{M} \overline{Y}_{\beta_M^{q+1}}^{x}\right) \frac{\beta^{q+1}_M}{M}  \overline{\xi^n}_{\beta_M^{q+1}} } \right| \right)  \\
		&\phantom{==}+\left| \mathbb{E}\left( e^{iu_1^T\left(\left\lbrace R_{\beta_M}^i\right\rbrace_{ i \leq q} \right) +iu_2\sqrt{M}\left(\phi\left(\frac{\beta^{q+1}_M}{M} \overline{Y}_{\beta_M^{q+1}}^{x}\right) \times \frac{\beta^{q+1}_M}{M} \left( \overline{Z}_{\beta_M^{q+1}}+\overline{\xi^n}_{\beta_M^{q+1}}\right) - \theta\right) }\right)-\mathbb{E}\left(e^{iu_1^TU} \right) e^{-\frac{1}{2}\sigma^2_{T_{n}}u_2^2} \right|\\
		&\phantom{==}+\left| e^{-\frac{1}{2}\sigma^2_{T_{n}}u_2^2}- e^{-\frac{1}{2}\sigma^2_{T}u_2^2}\right| .
		\end{align*}
		
		Let us now study the first term of the right-hand side.\\ By observing that $H_M=\sqrt{M}\phi\left(\frac{\beta^{q+1}_M}{M} \overline{Y}_{\beta_M^{q+1}}^{x}\right) \frac{\beta^{q+1}_M}{M}  \overline{\xi}_{\beta_M^{q+1}} \overset{d}{\Longrightarrow} \mathcal{N}\left(0, \beta^{q+1}\phi^2\left(\beta^{q+1}m_Y \right) \textrm{Var}\left( \xi\right) \right)$, this in particular implies that the sequence $H_M$ is tight:  $\forall \epsilon >0$ there exists $K_\epsilon>0$ such that $\sup_{M\geq 1}\mathbb{P}\left(\left| H_M\right|>  K_\epsilon  \right)\leq \epsilon$. One has

		\begin{align*}
		\mathbb{E}&\left( \left|1- e^{iu_2\sqrt{M}\phi\left(\frac{\beta^{q+1}_M}{M} \overline{Y}_{\beta_M^{q+1}}^{x}\right) \frac{\beta^{q+1}_M}{M}  \overline{\xi^n}_{\beta_M^{q+1}} } \right| \right)=\mathbb{E}\left( \left|1- e^{i\frac{u_2}{n} H_M } \right| \right)\\
		&\phantom{=}=\mathbb{E}\left( \left|1- e^{i\frac{u_2}{n} H_M } \right|1_{\left|H_M \right| \leq K_\epsilon } \right)+\mathbb{E}\left( \left|1- e^{i\frac{u_2}{n} H_M } \right|1_{\left|H_M \right| > K_\epsilon } \right)\\
		&\phantom{=}\leq \frac{\left|u_2 \right| }{n}K_\epsilon+2\epsilon
		\end{align*}
		
		where to obtain the last inequality we use that $\left|1-e^{ix} \right|\leq \left| x\right|$,  $\forall x\in \mathbb{R}$.

		Therefore we have obtained that $\forall \epsilon>0$, $\forall n>0$
		\begin{align*}
		&\left| \mathbb{E}\left( e^{iu_1^T\left(\left\lbrace R_{\beta_M}^i\right\rbrace_{ i \leq q} \right) +iu_2\sqrt{M}\left(\phi\left(\frac{\beta^{q+1}_M}{M} \overline{Y}_{\beta_M^{q+1}}^{x}\right) \times \frac{\beta^{q+1}_M}{M} \overline{Z}_{\beta_M^{q+1}} - \theta\right) }\right)-\mathbb{E}\left(e^{iu_1^TU} \right) e^{-\frac{1}{2}\sigma^2_{T}u_2^2} \right|\\
		&\phantom{=}\leq \frac{\left|u_2 \right| }{n}K_\epsilon+2\epsilon  + \left|  e^{-\frac{1}{2}\sigma^2_{T_{n}}u_2^2}-e^{-\frac{1}{2}\sigma^2_{T}u_2^2} \right| \\
		&\phantom{==}+ \left| \mathbb{E}\left( e^{iu_1^T\left(\left\lbrace R_{\beta_M}^i\right\rbrace_{ i \leq q} \right) +iu_2\sqrt{M}\left(\phi\left(\frac{\beta^{q+1}_M}{M} \overline{Y}_{\beta_M^{q+1}}^{x}\right) \times \frac{\beta^{q+1}_M}{M} \left( \overline{Z}_{\beta_M^{q+1}}+\overline{\xi^n}_{\beta_M^{q+1}}\right) - \theta\right) }\right)-\mathbb{E}\left(e^{iu_1^TU} \right) e^{-\frac{1}{2}\sigma^2_{T_{n}}u_2^2} \right|.
		\end{align*}
		To conclude this part of the proof it is sufficient to take in the above expression first the superior limit as $M\rightarrow \infty$ so that the last term converges to $0$ by (\ref{conv-dis}), then the superior limit as $n\rightarrow \infty$ so that $\frac{\left|u_2 \right| }{n}K_\epsilon\rightarrow 0$ and $\left| e^{-\frac{1}{2}\sigma^2_{T}u_2^2}- e^{-\frac{1}{2}\sigma^2_{T_{n}}u_2^2} \right|\rightarrow 0$ and finally the superior limit as $\epsilon\rightarrow 0$.\\
		
		\textit{(ii)} Let us now take $(z,y_1,\cdots,y_q)\in \mathbb{R}^{q+2}$ different from the zero vector.	One has
		
		\begin{align}
		&\left(\left\lbrace R_{\beta_M}^{1,y_1}\right\rbrace ,\cdots,\left\lbrace R_{\beta_M}^{1,y_q} \right\rbrace,\sqrt{M}\left(\phi\left(\tfrac{\beta^{q+1}_M}{M} \overline{Y}_{\beta_M^{q+1}}^{x}\right) \times \tfrac{\beta^{q+1}_M}{M} \overline{Z}_{\beta_M^{q+1}}^z- \theta\right)\right)\label{conv_vettore} \\ \nonumber
		& = \left( \left( \left\lbrace     
		\phi\left(\tfrac{\beta_M^{q+1}}{M}\overline{Y}_{\beta_M^{q+1}}^{x}\right)y_i+
		\left\lbrace  R_{\beta_M}^{i}\right\rbrace\right\rbrace\right)_{1\leq i \leq q}  ,\sqrt{M}\left(\phi\left(\tfrac{\beta^{q+1}_M}{M} \overline{Y}_{\beta_M^{q+1}}^{x}\right) \times \tfrac{\beta^{q+1}_M}{M} \overline{Z}_{\beta_M^{q+1}}- \theta\right)\right)\\\nonumber
		&\phantom{==}+\left(0,\cdots,0,\frac{z}{\sqrt{M}}\phi\left(\tfrac{\beta^{q+1}_M}{M} \overline{Y}_{\beta_M^{q+1}}^{x}\right) \right) .\nonumber
		\end{align}
		
		By the Strong Law of Large Numbers and the hypothesis of continuity of $\phi$ at $\beta^{q+1}m_Y$,  $\lim\limits_{M\rightarrow \infty}\phi\left(\tfrac{\beta_M^{q+1}}{M}\overline{Y}_{\beta_M^{q+1}}^{x}\right)y_i=\phi\left(\beta^{q+1}m_Y\right)y_i$. We can therefore apply the previous step and Slutsky's theorem to deduce the following convergence in distribution 
		\begin{align*}
		&\left(      
		\phi\left(\tfrac{\beta_M^{q+1}}{M}\overline{Y}_{\beta_M^{q+1}}^{x_2}\right)y_1+
		\left\lbrace  R_{\beta_M}^1\right\rbrace  ,\cdots, \phi\left(\tfrac{\beta_M^{q+1}}{M}\overline{Y}_{\beta_M^{q+1}}^{x}\right)y_q+ \left\lbrace R_{\beta_M}^q \right\rbrace ,\sqrt{M}\left(\phi\left(\tfrac{\beta^{q+1}_M}{M} \overline{Y}_{\beta_M^{q+1}}^{x}\right) \times \tfrac{\beta^{q+1}_M}{M} \overline{Z}_{\beta_M^{q+1}}- \theta\right)\right)\\
		&\phantom{===} \overset{d}{\Longrightarrow}\left(\phi\left(\beta^{q+1}m_Y\right)y_1+U_1,\cdots, \phi\left(\beta^{q+1}m_Y\right)y_q+U_q,T\right). 
		\end{align*}
		
		Now by observing that the set of the points of discontinuity of the function $(x_1,\cdots,x_q,y)\longmapsto \left(\left\lbrace x_1 \right\rbrace ,\cdots ,\left\lbrace x_q \right\rbrace,y \right)$ has a zero measure with respect to the law of $\left(\phi\left(\beta^{q+1}m_Y\right)y_1+U_1,\cdots, \phi\left(\beta^{q+1}m_Y\right)y_q+U_q,T\right)$ and applying the continuous mapping theorem we can deduce that 
		
		\begin{align*}
		& \left( \left( \left\lbrace     
		\phi\left(\tfrac{\beta_M^{q+1}}{M}\overline{Y}_{\beta_M^{q+1}}^{x}\right)y_i+
		\left\lbrace  R_{\beta_M}^i\right\rbrace\right\rbrace\right)_{1\leq i \leq q}  ,\sqrt{M}\left(\phi\left(\tfrac{\beta^{q+1}_M}{M} \overline{Y}_{\beta_M^{q+1}}^{x}\right) \times \tfrac{\beta^{q+1}_M}{M} \overline{Z}_{\beta_M^{q+1}}- \theta\right)\right)\\
		& \phantom{===} \overset{d}{\Longrightarrow}\left(\left\lbrace \phi\left(\beta^{q+1}m_Y\right)y_1+U_1\right\rbrace ,\cdots, \left\lbrace \phi\left(\beta^{q+1}m_Y\right)y_q+U_q\right\rbrace ,T\right) \overset{d}{=} \left(U_1,\cdots ,U_q,T \right). 
		\end{align*}
		This combined with the fact that $\left(0,\cdots,0,\frac{z}{\sqrt{M}}\phi\left(\tfrac{\beta^{q+1}_M}{M} \overline{Y}_{\beta_M^{q+1}}^{x}\right) \right) $ converges as $M$ goes to infinity to the zero vector allows to conclude that 
		(\ref{conv_vettore}) converges in distribution to $(U,T)$.
		
	\end{proof}
	
	\section{Proof of Proposition \ref{tcl_general}}\label{prooflemma}
	
	Before proving Proposition \ref{tcl_general}, we need some preliminary results. As done in the proof of Theorem \ref{mainresult}, in what follows we write $ R_{\beta_M}^{i}$ instead of $ R_{\beta_M}^{i,0}$ and $\overline{Z}_{\beta_M^{q+1}}$ instead of $\overline{Z}_{\beta_M^{q+1}}^0$.
	Let us observe that for each $j=1,\cdots,q$  
	\begin{align*}\label{vector_to_study_0}
	R_{\beta_M}^j 
	&=\sum\limits_{\ell=1}^{j}R_{\beta_M}^{\ell-1:\ell}
	\end{align*}
	where we have introduced the notation $R_{\beta_M}^{\ell-1:\ell}:=\phi\left(\frac{\beta_M^{q+1}}{M}\overline{Y}_{\beta_M^{q+1}}^{x}\right) \left(Y_{\beta_M^{\ell - 1}+1}+\cdots+Y_{\beta_M^{\ell }}\right)$ with $\beta_M^0:=0$ for each $M$ by convention. 
	Let us therefore study the asymptotic behaviour of the vector
	
	\begin{equation}\label{Rnbeta}
	\left(R_{M} ,K_M\right)
	\end{equation}
	where $$R_{M} :=\left(R_{\beta_M}^{{0:1}},R_{\beta_M}^{1:2},\cdots,R_{\beta_M}^{q-1:q} \right)$$
	and 
	$$ K_M:= \sqrt{M}\left(\phi\left( \frac{\beta_M^{q+1}}{M}\overline{Y}_{\beta_M^{q+1}}^{x}\right)\times \frac{\beta^{q+1}_M}{M} \overline{Z}_{\beta_M^{q+1}} - \theta\right)=\frac{\phi\left( \frac{\beta_M^{q+1}}{M}\overline{Y}_{\beta_M^{q+1}}^{x}\right)\beta^{q+1}_M\overline{Z}_{\beta_M^{q+1}} -M\theta}{\sqrt{M}} $$
	with $\theta$ as in (\ref{theta}).\\

	The proof of the following proposition is given in Section \ref{sectionproofofprop}.

	\begin{prop}\label{asymptoticRn} 
		Let $\gamma= \left( \beta^1,\beta^2-\beta^1,\cdots, \beta^{q}-\beta^{q-1}\right)$.  Under the assumptions of Proposition \ref{tcl_general} the following convergence in total variation holds
		\begin{equation}\label{caseR2}
		d_{TV}\bigg(\mu_{\left(  \frac{R_{M}- m_{Y} \phi(\beta^{q+1}m_{Y})M\gamma }{\sqrt{M}},K_M \right)},\mathcal{N}\left( 0,\Gamma\right) \bigg) \underset{M\rightarrow \infty}{\longrightarrow} 0
		\end{equation}
		where $\Gamma\in \mathbb{R}^{(q+1)\times (q+1)}$ is a positive definite covariance matrix. 		
	\end{prop}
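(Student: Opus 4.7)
The strategy is (i) to rewrite the vector under study as a measurable function of a $2(q+1)$-dimensional vector of \emph{independent} centered block sums, (ii) to apply the two-dimensional total-variation CLT of \cite{ballycaramellino} blockwise, and (iii) to use the differentiability of $\phi$ at $\beta^{q+1}m_Y$ to identify the Gaussian limit.

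Let $\phi_\infty:=\phi(\beta^{q+1}m_Y)$, $\gamma_{q+1}:=\beta^{q+1}-\beta^q$, and $\beta_M^0:=0$. For $\ell=1,\ldots,q+1$ set
$$A_M^\ell=\frac{1}{\sqrt{M}}\sum_{i=\beta_M^{\ell-1}+1}^{\beta_M^\ell}(Y_i-m_Y),\ B_M^\ell=\frac{1}{\sqrt{M}}\sum_{i=\beta_M^{\ell-1}+1}^{\beta_M^\ell}(Z_i-m_Z),\ V_M=(A_M^\ell,B_M^\ell)_{\ell=1,\ldots,q+1}.$$
Both $\overline{Y}_{\beta_M^{q+1}}^x$ and $\overline{Z}_{\beta_M^{q+1}}$ are affine in $V_M$ plus constants depending only on $x$, $M$, and the $\beta_M^\ell$; consequently the vector of interest equals $G_M(V_M)$ for an explicit measurable map $G_M\colon\R^{2(q+1)}\to\R^{q+1}$. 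The pairs $(A_M^\ell,B_M^\ell)$ are mutually independent (built from disjoint index blocks), and each is a normalized sum of i.i.d.\ copies of $(Y_1-m_Y,Z_1-m_Z)$, whose law inherits the absolutely continuous component of $(Y_1,Z_1)$ assumed in Proposition \ref{tcl_general}. The multidimensional TV-CLT of \cite{ballycaramellino} applied in each block, combined with the product-measure sub-additivity $d_{TV}(\mu_1\otimes\mu_2,\nu_1\otimes\nu_2)\le d_{TV}(\mu_1,\nu_1)+d_{TV}(\mu_2,\nu_2)$, yields
$$d_{TV}\left(\mu_{V_M},\mathcal{N}(0,\Sigma_V)\right)\underset{M\rightarrow\infty}{\longrightarrow}0,$$
where $\Sigma_V$ is block-diagonal with blocks $\gamma_\ell\Sigma_{(Y_1,Z_1)}$, each positive definite (the absolutely continuous component of $(Y_1,Z_1)$ forces its joint support to have positive Lebesgue measure in $\R^2$, which rules out concentration on a line).

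By the pushforward inequality \eqref{pushforward!} it then suffices to prove $d_{TV}(\mu_{G_M(V)},\mathcal{N}(0,\Gamma))\to 0$ for $V\sim\mathcal{N}(0,\Sigma_V)$. Writing the argument of $\phi_M$ as $\beta^{q+1}m_Y+h_M$ with $h_M=m_Y(\beta_M^{q+1}/M-\beta^{q+1})+M^{-1/2}(x+\sum_\ell A_M^\ell)$ (so that $h_M\to 0$), the differentiability of $\phi$ at $\beta^{q+1}m_Y$ yields $\phi_M=\phi_\infty+\phi'(\beta^{q+1}m_Y)h_M+h_M\varepsilon(h_M)$ with $\varepsilon(h)\to 0$ as $h\to 0$. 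Substituting and using $\sqrt{M}(\beta_M^\ell/M-\beta^\ell)\to 0$ gives the decomposition $G_M=L+\rho_M$ with $\rho_M\to 0$ a.s., where $L$ is the fixed linear map sending $V=(A^\ell,B^\ell)_{\ell}$ to
$$(L(V))_\ell=\phi_\infty A^\ell+m_Y\gamma_\ell\phi'(\beta^{q+1}m_Y)\sum_{j=1}^{q+1}A^j\ \ (\ell\le q),\quad (L(V))_{q+1}=\phi_\infty\sum_j B^j+m_Z\beta^{q+1}\phi'(\beta^{q+1}m_Y)\sum_j A^j.$$
Since $\phi_\infty\ne 0$, $L$ has rank $q+1$, so $\Gamma:=L\Sigma_V L^\top$ is positive definite and $L(V)\sim\mathcal{N}(0,\Gamma)$.

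The principal obstacle will be upgrading the a.s.\ identity $G_M=L+\rho_M$ with $\rho_M\to 0$ to the TV bound $d_{TV}(\mu_{G_M(V)},\mu_{L(V)})\to 0$, since a Slutsky-type argument only delivers weak convergence. The plan is to exploit the smooth, strictly positive Gaussian density of $V$ together with the explicit structure of $\rho_M$ (each coordinate is a polynomial in $V$ times either $h_M\varepsilon(h_M)$ or a vanishing deterministic sequence): completing $G_M$ by adjoining a complementary set of linear coordinates of $V$ yields an $M$-dependent map on $\R^{2(q+1)}$ whose Jacobian converges uniformly on compact sets to that of the fixed invertible completion of $L$, so that a change-of-variables computation provides an explicit density for $G_M(V)$ which can be shown to converge in $L^1$ to the Gaussian density of $\mathcal{N}(0,\Gamma)$.
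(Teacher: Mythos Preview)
Your overall architecture matches the paper's: rewrite the vector as $g_M(V_M)$ with $V_M$ a vector of independent block sums, apply the Bally--Caramellino TV-CLT blockwise together with the product sub-additivity of $d_{TV}$, use the pushforward inequality \eqref{pushforward!} to reduce to $g_M(G)$ with $G$ Gaussian, and identify the limiting linear map $L$ (your formula for $L$ and the argument that $\Gamma=L\Sigma_V L^\top$ is positive definite are both correct).

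The gap is in your final paragraph, which is precisely where the real work lies. Two concrete problems:
\begin{itemize}
\item You never invoke the integrability hypothesis \eqref{dominateconvergence1}. This is not cosmetic: the Jacobian of the change of variables is $|\phi(\cdot)|^{-(q+1)}$ evaluated at a point depending on the Gaussian coordinate $\sum_j A^j$, and without \eqref{dominateconvergence1} there is no control on the contribution from the event where this coordinate is large (of order $\sqrt M$), where $\phi$ may vanish or be arbitrarily small. ``Uniform convergence of the Jacobian on compact sets'' handles only the bulk and says nothing about this tail, which carries positive mass for every $M$.
\item Your plan to ``complete $G_M$ to an invertible map and compute its Jacobian'' implicitly differentiates $\phi$ at points other than $\beta^{q+1}m_Y$. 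But $\phi$ is only assumed measurable with differentiability at that single point, so the Jacobian of a generic completion does not even exist.
\end{itemize}
The paper isolates exactly the coordinate $W_{q+2}=\sum_j A^j$ that enters $\phi$ and, \emph{conditionally on it}, the map is affine with explicit Jacobian $|\phi(\eta_{M,q+2}/M+W_{q+2}/\sqrt M)|^{-(q+1)}$; this yields an explicit density for $g_M(G)$ as an integral in $W_{q+2}$. One then splits $\{|W_{q+2}|\le \tfrac{\delta}{2}\sqrt M\}$ (where continuity of $\phi$ at $\beta^{q+1}m_Y$ gives a uniform lower bound $|\phi|\ge |\phi_\infty|/2$ and dominated convergence applies) from its complement (where \eqref{dominateconvergence1} provides the integrable dominating function). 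Pointwise convergence of the density then upgrades to TV via a Scheff\'e-type lemma (Lemma~\ref{decomposition} in the paper). Your sketch should be replaced by this argument; as written, it would fail for a $\phi$ that is differentiable only at $\beta^{q+1}m_Y$ and for which \eqref{dominateconvergence1} is genuinely needed.
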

	
	Let us now state the Weyl criterion  concerning the convergence in distribution to a vector composed of a uniform random variable on $\left[ 0,1\right]^q$ and an independent vector. 
	
	\begin{teo}[Weyl criterion]\label{weyl.criterion}
		Let $\left( B_M\right)_{M\geq 1} $
		be a sequence of  $\mathbb{R}^q$-valued random vectors and let $\left( H_M\right)_{M\geq 1} $ be a sequence of    $\mathbb{R}^{q'}$-valued random vectors that converges in distribution to a $\mathbb{R}^{q'}$-valued random vector $H$. Then as $M\rightarrow \infty$ the sequence $\left( \left\lbrace B_{M}^1\right\rbrace ,\cdots,\left\lbrace B_{M}^q \right\rbrace,H_M\right)$ converges in distribution   to $\left(U,H \right) $ where $U$ is independent of $H$ and uniformly distributed on $\left[ 0,1\right]^q$ if and only if for every $k\in \mathbb{Z}^q\backslash \left\lbrace 0 \right\rbrace $  and $u\in \mathbb{R}^{q'}$
		
		\begin{equation} \label{suffcond}
		\lim_{M\rightarrow \infty}\phi_{\left( B_{M},H_M\right) }\left(2k\pi ,u\right) =0.
		\end{equation}
	\end{teo}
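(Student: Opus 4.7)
I would handle the two implications separately. For the necessity, the plan is a direct check: for $k \in \mathbb{Z}^q\setminus\{0\}$ and $u \in \mathbb{R}^{q'}$, since $2\pi k\cdot \lfloor B_M\rfloor\in 2\pi\mathbb{Z}$ one has $e^{i 2\pi k \cdot B_M}=e^{i 2\pi k\cdot \{B_M\}}$, hence
\[
\phi_{(B_M, H_M)}(2\pi k, u) = \mathbb{E}\bigl[e^{i 2\pi k \cdot \{B_M\}+ iu\cdot H_M}\bigr].
\]
Weak convergence against the bounded continuous exponential on the right forces the limit to be $\mathbb{E}[e^{i 2\pi k\cdot U}]\mathbb{E}[e^{iu\cdot H}]$, which vanishes because $\int_{[0,1]^q}e^{i 2\pi k\cdot x}\,dx = 0$ as soon as some $k_j\neq 0$.

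For the sufficiency, the plan is to combine tightness with a Fourier identification of subsequential limits. Since $\{B_M\}\in[0,1]^q$ (compact) and $H_M$ is tight, the joint sequence $(\{B_M\},H_M)$ is tight; it then suffices to show that every subsequential weak limit $(V,W)$, say along $(M_n)_{n\ge 1}$, satisfies $(V,W)\overset{d}{=}(U,H)$. The marginal $W\overset{d}{=} H$ is immediate, and weak convergence together with the same identity $e^{i 2\pi k\cdot \{B_M\}}=e^{i 2\pi k\cdot B_M}$ propagates the hypothesis \eqref{suffcond} to
\[
\phi_{(V,W)}(2\pi k, u) = \begin{cases} \phi_H(u) & \text{if } k = 0,\\ 0 & \text{if } k\in \mathbb{Z}^q\setminus\{0\}, \end{cases} \qquad u\in\mathbb{R}^{q'}.
\]

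The central step is then to upgrade these partial values of the characteristic function into a full description of the joint law. For each fixed $k\neq 0$, the bounded complex measure on $\mathbb{R}^{q'}$ defined by $\lambda_k(A):= \mathbb{E}\bigl[e^{i 2\pi k\cdot V}1_{\{W\in A\}}\bigr]$ has identically vanishing Fourier transform, hence is the zero measure by Fourier uniqueness. So $\mathbb{E}[e^{i 2\pi k\cdot V}g(W)]=0$ for every bounded measurable $g$ and every $k\neq 0$. Uniformly approximating an arbitrary continuous $1$-periodic function $f:\mathbb{R}^q\to \mathbb{C}$ by trigonometric polynomials (e.g.\ via Fejér means) then gives
\[
\mathbb{E}[f(V)g(W)] = \left(\int_{[0,1]^q} f(x)\,dx\right)\mathbb{E}[g(H)] = \mathbb{E}[f(U)]\,\mathbb{E}[g(H)]
\]
for every bounded continuous $g$, which already identifies the torus-valued pushforward of $V$ with the Haar measure and its independence from $W$.

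I expect the main obstacle of the plan to be the last point, namely transferring this conclusion from the torus $\mathbb{R}^q/\mathbb{Z}^q$ back to $\mathbb{R}^q$: \emph{a priori} the weak limit $V$ of $\{B_{M_n}\}\in[0,1)^q$ could place mass on the faces $\{x_j=1\}$, which is invisible on the torus. However, the display above with $g\equiv 1$ shows that each marginal of $V$ projects to the atomless Haar measure on $\mathbb{T}$, forcing $\mathbb{P}(V_j\in\{0,1\})=0$ for every $j$; hence $V\in(0,1)^q$ almost surely and $V\sim U$ in $[0,1]^q$. Combined with the independence from $W\sim H$ above, this yields $(V,W)\overset{d}{=}(U,H)$, and since this holds for every subsequential weak limit, $(\{B_M\}, H_M)$ converges in distribution to $(U,H)$, closing the plan.
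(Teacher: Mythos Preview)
Your argument is correct. The necessity direction coincides with the paper's, but for sufficiency you take a genuinely different route. The paper works directly with the characteristic function $\phi_{(\{B_M\},H_M)}(u_1,u_2)$: it replaces $x\mapsto e^{iu_1^Tx}$ on $[0,1]^q$ by a smooth periodic modification $\phi_\epsilon$ that agrees with it on $[0,1-\epsilon]^q$, expands $\phi_\epsilon$ in a (uniformly convergent) Fourier series to reduce to the hypothesis at frequencies $2\pi k$, and controls the boundary strip $[0,1]^q\setminus[0,1-\epsilon]^q$ by a second auxiliary periodic bump $\bar\phi_\epsilon$ whose Fourier coefficients are again handled via the hypothesis. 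Your approach instead uses tightness to pass to subsequential limits $(V,W)$, a Fourier-uniqueness argument for the complex measures $\lambda_k(A)=\mathbb{E}[e^{i2\pi k\cdot V}1_{\{W\in A\}}]$ to get $\mathbb{E}[e^{i2\pi k\cdot V}g(W)]=0$, and then Fej\'er approximation on the torus; you dispose of the boundary issue cleanly by noting that the induced law of $V$ on $\mathbb{T}^q$ is Haar, hence atomless, so $\mathbb{P}(V_j\in\{0,1\})=0$. Your route is more conceptual and somewhat shorter; the paper's direct estimate is more self-contained (no appeal to Fourier uniqueness for complex measures or to tightness/subsequences) and in principle closer to a quantitative bound, at the cost of the two auxiliary constructions $\phi_\epsilon$ and $\bar\phi_\epsilon$.
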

	
	For the sake of completeness, the proof of Theorem \ref{weyl.criterion} is provided in Section \ref{sectionproofofprop}. 	We are now ready to prove Proposition
	\ref{tcl_general}.
	
	\begin{proof}[Proof of Proposition \ref{tcl_general}] 
		By Theorem \ref{weyl.criterion}, it is sufficient to prove that for each $k\in \mathbb{Z}^q\backslash \left\lbrace 0 \right\rbrace $ , $u\in \mathbb{R}$
		\begin{align*}
		\lim_{M\rightarrow \infty}\phi_{\left( R_{\beta_M}^1 ,\cdots, R_{\beta_M}^q ,\sqrt{M}\left(\phi\left(\frac{\beta^{q+1}_M}{M} \overline{Y}_{\beta_M^{q+1}}^{x}\right) \times \frac{\beta^{q+1}_M}{M} \overline{Z}_{\beta_M^{q+1}}- \theta\right)   \right) }(2k\pi,u)=0.
		\end{align*}

		Let us observe that it is equivalent to prove that for each $k\in \mathbb{Z}^{q}\backslash \left\lbrace 0 \right\rbrace $ , $u\in \mathbb{R}$
		\begin{equation}\label{pur2}
		\lim_{M\rightarrow \infty}\phi_{\left(R_M,K_M \right) }(2k\pi,u)=0
		\end{equation}
		where $\left(R_M,K_M \right)$ has been introduced in (\ref{Rnbeta}). 
		
		By Proposition \ref{asymptoticRn}, 
		\begin{equation}\label{genres}
		\lim_{M\rightarrow \infty}d_{TV}(\mu_{\left( a_MR_{M}+b_M,K_M\right) }, \mu_{Y})=0
		\end{equation} 
		where $a_M=\frac{1}{\sqrt{M}}$, $b_M = - m_{Y} \phi(\beta^{q+1}m_{Y})\sqrt{M}\gamma$ and $Y\sim \mathcal{N}(0,\Gamma)$, $\Gamma$ positive definite.
		
		For $u:=(u_1,u_2)\in \left\lbrace \mathbb{R}^{q}\backslash \left\lbrace 0 \right\rbrace\right\rbrace \times\mathbb{R} $
		\begin{align*}
		\left| \phi_{\left( R_{M},K_M\right) }(u_1,u_2)\right|&= \left| \phi_{\left( a_MR_{M}+b_M,K_M\right)}\left( \frac{u_1}{a_M},u_2\right)e^{-\frac{iu_1^Tb_M}{a_M}} \right|= \left| \phi_{\left( a_MR_{M}+b_M,K_M\right)}\left( \frac{u_1}{a_M},u_2\right)\right|\\
		&\leq \left| \phi_{\left( a_MR_{M}+b_M,K_M\right)}\left( \frac{u_1}{a_M},u_2\right) -\phi_{Y}\left( \frac{u_1}{a_M},u_2\right) \right|+\left|\phi_{Y}\left( \frac{u_1}{a_M},u_2\right) \right|.
		\end{align*}
		Since by (\ref{dtvc}) the first term of the right-hand side can be bounded from above by  $2d_{TV}(\mu_{\left( a_MR_{M}+b_M,K_M\right) }, \mu_{Y}),$
		we deduce that  $$\limsup_{M\rightarrow \infty}\left| \phi_{\left( R_{M},K_M\right)}(u_1,u_2)\right|\leq \limsup_{M\rightarrow \infty}\left|\phi_{Y}\left( \frac{u_1}{a_M},u_2\right) \right|.$$
		Since the law of $Y$ is absolutely continuous and $\lim_{M\rightarrow \infty}a_M=0$, the right-hand side goes to $0$ as $M\rightarrow \infty$ by the Riemann Lebesgue lemma. In particular (\ref{pur2}) is true. \\
	\end{proof}
	\section{Proof of Proposition \ref{asymptoticRn} and Theorem \ref{weyl.criterion} } \label{sectionproofofprop}
	In this section we are first going to prove Proposition \ref{asymptoticRn}. Let $\left(Y_i,Z_i\right)_{i\geq 1}$ be a sequence of square-integrable i.i.d. random vectors in $\mathbb{R}^2$ such that the law of $\left(Y_i,Z_i\right)$ has an absolutely continuous component. We denote by  $\Sigma$ its covariance matrix which has rank $2$.
	
	The proof of Proposition \ref{asymptoticRn} strongly relies on the following result. 
	\begin{teo}\label{proho_generale}
		Let $\left(Y_i,Z_i\right)_{i\geq 1}$ be a sequence of square-integrable i.i.d. random vectors in $\mathbb{R}^2$ such that the law of $\left(Y_i,Z_i\right)$ has an absolutely continuous component. Under the notations introduced above
		\begin{align*}
		\lim_{M\rightarrow \infty}d_{TV}\bigg(\mu_{\frac{1}{\sqrt{M}}\sum\limits_{k=1}^{M}\left(
			Y_{k}-m_Y,Z_{k}- m_Z
			\right)  }, \mu_{G} \bigg)=0
		\end{align*} 
		where $G \sim \mathcal{N}(0,\Sigma)$.
	\end{teo}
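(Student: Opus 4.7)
The plan is to reduce the statement to the multidimensional central limit theorem in total variation of Bally and Caramellino \cite{ballycaramellino} (already recalled in the introduction), which concerns i.i.d.\ sequences with identity covariance matrix and a non-zero absolutely continuous component. The strategy is: first verify that $\Sigma$ is positive definite, then whiten the sequence $((Y_k,Z_k))_{k\ge 1}$, apply the cited theorem to the whitened sequence, and finally transfer back to the original scale via the invariance property \eqref{proptv2} of the total variation distance under invertible linear maps.

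The first step is to justify the rank-$2$ claim. If $\Sigma$ were degenerate, some non-zero $v\in\R^2$ would satisfy $v^T\Sigma v=0$, forcing $\mu_{(Y_1,Z_1)}$ to be concentrated on the affine line $\{w\in\R^2:v^Tw=v^T(m_Y,m_Z)\}$, which has zero two-dimensional Lebesgue measure. This contradicts the existence of a non-zero absolutely continuous component, so $\Sigma$ is invertible and admits an invertible symmetric square root $\Sigma^{1/2}$.

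I would then set $W_k:=\Sigma^{-1/2}\bigl((Y_k,Z_k)^T-(m_Y,m_Z)^T\bigr)$, so that $(W_k)_{k\ge 1}$ is an i.i.d.\ centered, square-integrable sequence in $\R^2$ with identity covariance. The step I expect to require the most care is checking that the whitened law $\mu_{W_1}$ still possesses a non-zero absolutely continuous component: this follows from the change of variables formula applied to a density of $\mu_{(Y_1,Z_1),c}$, the invertibility of $\Sigma^{1/2}$ being crucial here. Once this is verified, the Bally--Caramellino theorem yields
\[
d_{TV}\!\left(\mu_{\frac{1}{\sqrt{M}}\sum_{k=1}^M W_k},\mathcal{N}(0,I_2)\right)\underset{M\to\infty}{\longrightarrow}0.
\]
Since $\frac{1}{\sqrt{M}}\sum_{k=1}^M(Y_k-m_Y,Z_k-m_Z)^T=\Sigma^{1/2}\cdot\frac{1}{\sqrt{M}}\sum_{k=1}^M W_k$ and $\mathcal{N}(0,\Sigma)$ is the image of $\mathcal{N}(0,I_2)$ by the invertible linear map $\Sigma^{1/2}$, the invariance property \eqref{proptv2} closes the argument. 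The body of the proof is thus essentially a whitening-and-back computation; the non-trivial point is the preservation of the absolutely continuous component, on which the hypothesis of the theorem really bites.
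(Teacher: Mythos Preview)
Your proposal is correct and follows essentially the same route as the paper: whiten the centered sequence by an invertible linear map, apply Bally--Caramellino to the resulting i.i.d.\ sequence with identity covariance and a non-zero absolutely continuous component, and transfer back via the invariance property \eqref{proptv2}. The only cosmetic difference is that the paper whitens with $D^{-1/2}O$ coming from the spectral decomposition $\Sigma=O^TDO$, whereas you use the symmetric square root $\Sigma^{-1/2}$; you are also more explicit than the paper about why $\Sigma$ has full rank and why the absolutely continuous component survives the linear change of variables.
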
 
	
	Let $F$ be a centered square-integrable random variable in $\mathbb{R}^n$ with identity covariance matrix and let $F_k$, $k\in \mathbb{N}^*$, independent copies of $F$. 
	The main instrument that we will use in the proof of Theorem  \ref{proho_generale} is the result about the convergence in total variation for the CLT that is $\lim_{M\rightarrow \infty}d_{TV}\bigg(\mu_{\frac{1}{\sqrt{M}}\sum_{k=1}^{M}F_k},\mathcal{N}(0, I_{n\times n}) \bigg)=0$ where $I_{n\times n}$ denotes the identity matrix of size $n$.\\
	Prohorov \cite{prohorov} in $1952$ was the first to give his contribution to the problem: he proved that, in dimension $1$,  a necessary and sufficient
	condition in order to get the result is that there exists $M_0$ such that the law of $\sum_{k=1}^{M_0}F_k$ has an
	absolutely continuous component. Recently Bally and Caramellino \cite{ballycaramellino} contribute in the same direction by extending the result to any dimension. They prove that under the assumption that the law of $F$ has an absolutely continuous component, $	\lim_{M\rightarrow \infty}d_{TV}\bigg(\mu_{\frac{1}{\sqrt{M}}\sum\limits_{k=1}^{M}F_k},\mathcal{N}(0, I_{n\times n}) \bigg)=0$.

	We are now ready to prove Theorem \ref{proho_generale}.
	\begin{proof}[Proof of Theorem \ref{proho_generale}]
		Let $O\in \mathbb{R}^{2\times 2}$ be an orthogonal matrix $\left( O^{T}O=I\right)$  that diagonalizes $\Sigma$ that is $\Sigma = O^TDO$ where $D$ is the diagonal matrix containing the eigenvalues $\lambda_1, \lambda_2$ of  $\Sigma$.	
		Given $G\sim \mathcal{N}(0,\Sigma)$ and by introducing the following notation $D^{-\frac{1}{2}}= \mathrm{diag}\left(\frac{1}{\sqrt{\lambda_1}} ,\frac{1}{\sqrt{\lambda_2}} \right)$, we have 
		\begin{align*} 
		d_{TV}\bigg(\mu_{\frac{1}{\sqrt{M}}\sum\limits_{k=1}^{M}\left(Y_{k}-m_Y,Z_{k} -m_Z\right)}, \mu_{G} \bigg)=d_{TV}\left(\mu_{\frac{D^{-\frac{1}{2}}}{\sqrt{M}}\sum\limits_{k=1}^{M}O\left(Y_{k}-m_Y,Z_{k}-m_Z\right)},\mu_{D^{-\frac{1}{2}}OG} \right)
		\end{align*}
		
		where to obtain the above equality we apply (\ref{proptv2}) with $A=D^{-\frac{1}{2}}O$. By the hypothesis that the law of $\left( Y_1,Z_1\right)$ has an absolutely continuous component, the right-hand side converges to $0$ as $M\rightarrow \infty$ by the result of Bally and Caramellino mentioned above.
	\end{proof}	
	
	We now recall a result obtained by Parthasarathy and Steerneman in the second section of \cite{Steerneman} regarding the behavior of the total variation convergence with respect to the sum and the multiplication by a real sequence.
	
	\begin{lemma}\label{parthasarathy}
		Let $\left( B_M\right)_{M\geq1}$  and $\left( T_M\right)_{M\geq1}$ be two independent sequences of  $\mathbb{R}^d$-valued random variables  such that $\lim_{M\rightarrow \infty}d_{TV}\left(\mu_{B_M},\mu_{B}\right)=0$ and $T_M\overset{d}{\Longrightarrow }  T$ for  $\mathbb{R}^d$-valued random variables $B,T$. If $\mu_{B}$ is absolutely continuous with respect to the Lebesgue measure, then $\lim_{M\rightarrow \infty}d_{TV}\left(\mu_{B_M+T_M},\mu_{B+T}\right)=0$. Moreover under the same condition,  $\lim_{M\rightarrow \infty}d_{TV}\left(\mu_{c_MB_M},\mu_{c_0B}\right)=0$ if $\left(c_M\right)_{M\geq1} $ is a deterministic real sequence converging to $c_0\in\mathbb{R}^*$.
	\end{lemma}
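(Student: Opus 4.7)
The plan is to establish the two assertions of the lemma independently, each via a triangle inequality that isolates an error controlled by the hypothesis $d_{TV}(\mu_{B_M},\mu_B)\to 0$ from a second term in which the absolute continuity of $\mu_B$ must do the work of upgrading weak convergence of $T_M$ to a total variation statement.

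For the additive part, I would split
\[
d_{TV}\!\left(\mu_{B_M+T_M},\mu_{B+T}\right)\leq d_{TV}\!\left(\mu_{B_M+T_M},\mu_{B+T_M}\right)+d_{TV}\!\left(\mu_{B+T_M},\mu_{B+T}\right),
\]
where in the intermediate term $B$ is understood as independent of $T_M$ so that $\mu_{B+T_M}=\mu_B\ast\mu_{T_M}$ is unambiguously defined. For the first term, $(B_M,T_M)$ and $(B,T_M)$ both have product laws with the same second marginal, so $d_{TV}(\mu_{(B_M,T_M)},\mu_{(B,T_M)})=d_{TV}(\mu_{B_M},\mu_B)$, and pushing forward by the sum map through (\ref{pushforward!}) bounds this first term by $d_{TV}(\mu_{B_M},\mu_B)$, which vanishes by hypothesis. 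For the second term, both $\mu_{B+T_M}$ and $\mu_{B+T}$ admit densities $p_M=p_B\ast\mu_{T_M}$ and $p=p_B\ast\mu_T$, so it equals $\tfrac12\|p_M-p\|_1$, and everything reduces to showing this $L^1$-convergence.

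This $L^1$-convergence is the main obstacle. I would proceed by approximation: given $\varepsilon>0$, choose $p_B^\varepsilon\in C_c(\R^d)$ with $\|p_B-p_B^\varepsilon\|_1<\varepsilon$, and set $p_M^\varepsilon=p_B^\varepsilon\ast\mu_{T_M}$, $p^\varepsilon=p_B^\varepsilon\ast\mu_T$. Young's inequality for convolution against a probability measure gives $\|p_M-p_M^\varepsilon\|_1<\varepsilon$ and $\|p-p^\varepsilon\|_1<\varepsilon$, reducing matters to $\|p_M^\varepsilon-p^\varepsilon\|_1\to 0$. For each fixed $x$, the map $y\mapsto p_B^\varepsilon(x-y)$ is bounded continuous, hence the weak convergence $T_M\to T$ gives pointwise $p_M^\varepsilon(x)\to p^\varepsilon(x)$. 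To pass from pointwise to $L^1$, I would use tightness of $\{\mu_{T_M}\}\cup\{\mu_T\}$ together with the compact support of $p_B^\varepsilon$: by Fubini, the tail integrals of $p_M^\varepsilon$ and $p^\varepsilon$ over $\{|x|>R\}$ are dominated, for $R$ large enough, by $\|p_B^\varepsilon\|_1$ times the mass of $T_M$, $T$ outside a fixed compact, which is made uniformly small by tightness; on the ball $\{|x|\leq R\}$, dominated convergence with majorant $\|p_B^\varepsilon\|_\infty$ closes the argument. Sending $\varepsilon\to 0$ concludes.

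For the multiplicative claim, I would write
\[
d_{TV}\!\left(\mu_{c_MB_M},\mu_{c_0B}\right)\leq d_{TV}\!\left(\mu_{c_MB_M},\mu_{c_MB}\right)+d_{TV}\!\left(\mu_{c_MB},\mu_{c_0B}\right).
\]
For $M$ large enough, $c_M\neq 0$, so (\ref{proptv2}) applied with the invertible matrix $c_MI$ reduces the first term to $d_{TV}(\mu_{B_M},\mu_B)\to 0$. For the second term I would pass to densities: after the change of variables $y=x/c_0$ with $\alpha_M:=c_0/c_M\to 1$, it equals $\tfrac12\int|\alpha_M^d p_B(\alpha_M y)-p_B(y)|\,dy$, which is the statement of continuity of dilation in $L^1$. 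I would again conclude by $C_c$-approximation: for a continuous compactly supported approximant of $p_B$ the vanishing is immediate from uniform continuity and $\alpha_M\to 1$, and the general case follows by $L^1$-density, with the approximation errors controlled by the obvious change of variables.
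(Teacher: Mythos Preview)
The paper does not actually prove this lemma: it is stated as a recalled result from Parthasarathy and Steerneman \cite{Steerneman}, with no argument given in the paper itself. Your proposal therefore cannot be compared to ``the paper's own proof,'' but it is a correct and self-contained argument.

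Your approach is the natural one. In the additive claim the triangle split isolates the $d_{TV}(\mu_{B_M},\mu_B)$ error via the product-law identity and (\ref{pushforward!}); the remaining term is handled by showing $\|p_B\ast\mu_{T_M}-p_B\ast\mu_T\|_1\to 0$ through a $C_c$-approximation of $p_B$, pointwise convergence from the weak convergence of $T_M$, and a tightness/dominated-convergence argument to upgrade to $L^1$. (One could shorten this last step slightly: choosing $p_B^\varepsilon\geq 0$, both $p_M^\varepsilon$ and $p^\varepsilon$ are nonnegative with equal integrals $\|p_B^\varepsilon\|_1$, so Scheff\'e's lemma converts pointwise convergence to $L^1$-convergence directly.) In the multiplicative claim the split plus (\ref{proptv2}) reduces everything to continuity of dilations in $L^1$, which you establish by the standard $C_c$-approximation. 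All steps are sound; the only cosmetic point is that $\alpha_M^d$ should read $|\alpha_M|^d$, but since $c_M\to c_0\neq 0$ the ratio $\alpha_M$ is eventually positive and the distinction is moot.
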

	The first step in the proof of Proposition \ref{asymptoticRn} consists in applying Theorem  \ref{proho_generale} so that we can work with gaussian random vectors. Thus the new problem becomes to study the convergence in total variation of the law of a  given function of a normal random vector.
	The following lemma deals with this problem and its proof is given after the proof  of Proposition \ref{asymptoticRn}.
		
		\begin{lemma}\label{computationTV}
		Let 
			
			\begin{itemize}
				\item $(  \eta_{M,1},\cdots,\eta_{M,q+2}) _{M\geq 1} \subseteq \mathbb{R}^{q+2}$ such that for $i=1,\cdots,q+2$ $\lim\limits_{M\rightarrow \infty} \sqrt{M}\left( \frac{\eta_{M,i}}{M}-\eta_i\right) =0$ with  $(\eta_1,\cdots,\eta_{q+2}) \in \mathbb{R}^{q+2}$ 
				\item  $\left(W_1,\cdots, W_{q+2} \right)$  a zero-mean normal vector with a positive definite covariance matrix $\Sigma_1\in \mathbb{R}^{(q+2)\times(q+2)}$
				\item $\phi:\mathbb{R}\rightarrow\mathbb{R}$ be a measurable function differentiable at $\eta_{q+2}$ and such that $\phi(\eta_{q+2})\neq 0$.
			\end{itemize}

			If there exists
			$ \tilde{M}\in \mathbb{N}$ such that 
			
			\begin{equation}\label{dominateconvergence}	
			\int_{\mathbb{R}}\frac{1}{\left|\phi(\eta_{q+2}+y) \right|^{q+1}}e^{-\tilde{M}  y^2}dy < \infty ,
			\end{equation}
			the law of  the  following random vector 
			\begin{align}\label{vector1}
			\left(\frac{\phi\left( \frac{\eta_{M,q+2}}{M}+\frac{W_{q+2}}{\sqrt{M}}\right)\left(\sqrt{M}W_i+\eta_{M,i} \right) -M\phi (\eta_{q+2})\eta_i   }{\sqrt{M}} \right)_{1\leq i \leq q+1}  
			\end{align}
			
			converges in total variation as $M\rightarrow \infty$ to the law of $\left(\phi(\eta_{q+2})W_i + \eta_i\phi'(\eta_{q+2})W_{q+2}\right)_{1\leq i \leq q+1}$.	
		\end{lemma}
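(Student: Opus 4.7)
The plan is to compute the density of the random vector in (\ref{vector1}) by change of variables, show pointwise convergence to the density of the claimed limit, and invoke Scheff\'e's lemma to upgrade pointwise convergence of densities to $L^1$ (equivalently, total variation) convergence.

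Setting $\epsilon_M := \eta_{M,q+2}/M - \eta_{q+2}$ and $\phi_M(w) := \phi(\eta_{q+2}+\epsilon_M+w/\sqrt{M})$, the $i$-th coordinate of (\ref{vector1}) rewrites as $X_i^M = \phi_M(W_{q+2})W_i + c_i^M(W_{q+2})$ with $c_i^M(w) = \phi_M(w)\eta_{M,i}/\sqrt{M} - \sqrt{M}\phi(\eta_{q+2})\eta_i$. Hypothesis (\ref{dominateconvergence}) forces $\{\phi=0\}$ to be Lebesgue negligible, so $\phi_M(W_{q+2})\neq 0$ almost surely, and the map $(W_1,\ldots,W_{q+2})\mapsto(X^M,W_{q+2})$ is a.s.\ bijective with Jacobian determinant $\phi_M(W_{q+2})^{q+1}$. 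Hence $X^M$ has density
\[
p_M(x) = \int_{\mathbb{R}} \frac{p_W\bigl((x-c^M(w))/\phi_M(w),\,w\bigr)}{|\phi_M(w)|^{q+1}}\,dw,
\]
where $p_W$ denotes the joint Gaussian density of $(W_1,\ldots,W_{q+2})$. The same formula with $\phi_M(w)$ replaced by $\phi(\eta_{q+2})$ and $c^M(w)$ by $\eta\,\phi'(\eta_{q+2})w$ yields the density $p_\infty(x)$ of the claimed limit vector.

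Pointwise convergence of the integrand at each $w$ follows from elementary computations. Continuity of $\phi$ at $\eta_{q+2}$ (implied by differentiability) gives $\phi_M(w)\to\phi(\eta_{q+2})$. For $c_i^M(w)$, the decomposition
\[
c_i^M(w) = \phi_M(w)\sqrt{M}\bigl(\tfrac{\eta_{M,i}}{M}-\eta_i\bigr) + \eta_i\sqrt{M}\bigl[\phi_M(w)-\phi(\eta_{q+2})\bigr],
\]
combined with $\sqrt{M}(\eta_{M,i}/M-\eta_i)\to 0$, the differentiability of $\phi$ at $\eta_{q+2}$, and $\sqrt{M}\epsilon_M\to 0$ yields $c_i^M(w)\to \eta_i\phi'(\eta_{q+2})w$. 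By continuity of $p_W$, the integrand of $p_M(x)$ therefore converges pointwise in $w$ to that of $p_\infty(x)$.

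The main technical difficulty, and the step that actually needs (\ref{dominateconvergence}), is passing to the limit inside this $w$-integral at fixed $x$, since $1/|\phi_M(w)|^{q+1}$ has no uniform-in-$M$ majorant. I would split the domain into $A_M := \{w : |\epsilon_M + w/\sqrt{M}|\le\delta\}$ and its complement $B_M$, where $\delta>0$ is chosen so that $|\phi(\eta_{q+2}+z)|\ge|\phi(\eta_{q+2})|/2$ on $|z|\le\delta$. On $A_M$, $|\phi_M(w)|$ is bounded below by $|\phi(\eta_{q+2})|/2$, so a Gaussian upper bound $p_W(v,w)\le Ce^{-cw^2}$ (with $c = \lambda_{\min}(\Sigma_1^{-1})/2>0$) furnishes an $M$-independent integrable majorant, and dominated convergence applied to $1_{A_M}f_M$ (noting that $1_{A_M}(w)\to 1$ for every fixed $w$) gives the full limit $p_\infty(x)$. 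On $B_M$, performing the substitution $y = \epsilon_M+w/\sqrt{M}$, using $(y-\epsilon_M)^2 \ge y^2/4$ valid on this region for $M$ large (since $\epsilon_M\to 0$) and choosing $M$ large enough that $cM/8\ge\tilde{M}$, I obtain
\[
\int_{B_M} \frac{p_W\bigl((x-c^M(w))/\phi_M(w),w\bigr)}{|\phi_M(w)|^{q+1}}\,dw \;\le\; C\sqrt{M}\,e^{-(cM/8-\tilde{M})\delta^2}\int_{\mathbb{R}}\frac{e^{-\tilde{M} y^2}}{|\phi(\eta_{q+2}+y)|^{q+1}}\,dy,
\]
whose second factor is finite by (\ref{dominateconvergence}) and whose first factor tends to $0$ exponentially fast. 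Combining the two pieces gives $p_M(x)\to p_\infty(x)$ pointwise in $x$, and Scheff\'e's lemma then yields $\int|p_M-p_\infty|\,dx\to 0$, i.e.\ convergence in total variation.
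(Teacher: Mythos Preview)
Your proof is correct and follows essentially the same route as the paper's: compute the density of (\ref{vector1}) by a change of variables conditional on $W_{q+2}$, split the resulting $w$-integral into a region where $|\phi_M(w)|$ is bounded below (dominated convergence with the Gaussian tail of $p_W$ in $w$) and its complement (substitution $y=\epsilon_M+w/\sqrt{M}$, Gaussian tail plus hypothesis (\ref{dominateconvergence})), and conclude via Scheff\'e. The only cosmetic differences are the precise choice of splitting radius and the constants in the tail estimate; the paper packages Scheff\'e as its Lemma~\ref{decomposition}.
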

		\begin{remark}
			Let us observe that it is not difficult to  prove the pointwise convergence of (\ref{vector1}) to  
			$$\left(\phi(\eta_{q+2})W_i + \eta_i\phi'(\eta_{q+2})W_{q+2}\right)_{1\leq i \leq q+1}.$$
			Indeed for $i=1,\cdots,q+1$, using that  $\lim\limits_{M\rightarrow \infty} \sqrt{M}\left( \frac{\eta_{M,j}}{M}-\eta_j\right) =0$  and the hypothesis of differentiability of $\phi$ at $\eta_{q+2}$, one has

			\begin{align*}	&\frac{\phi\left( \frac{\eta_{M,q+2}}{M}+\frac{W_{q+2}}{\sqrt{M}}\right)\left(\sqrt{M}W_i+\eta_{M,i} \right) -M\phi (\eta_{q+2})\eta_i   }{\sqrt{M}}\\
			&\phantom{=}=\phi(\eta_{q+2})\left(W_i+\sqrt{M}\left(\frac{\eta_{M,i}}{M}-\eta_i \right) \right)+\phi'(\eta_{q+2})\left(W_{q+2}+\sqrt{M}\left( \frac{\eta_{M,q+2}}{M}-\eta_{q+2}\right)  \right)\left(\frac{W_i}{\sqrt{M}}+\frac{\eta_{M,i}}{M}\right) +o(1)\\
			&\phantom{=} \underset{M\rightarrow \infty}{\longrightarrow}\phi(\eta_{q+2})W_i + \eta_i\phi'(\eta_{q+2})W_{q+2}.
			\end{align*}
			The proof of the convergence in total variation will require more effort.
		\end{remark}
		\begin{remark}\label{defin.posit}
			Let us observe that the covariance matrix of the zero-mean normal random vector\\ $\left(\phi(\eta_{q+2})W_i + \eta_i\phi'(\eta_{q+2})W_{q+2}\right)_{1\leq i \leq q+1}$ is $A_1\Sigma_1A_1^T$ with   $A_1\in \mathbb{R}^{q+1\times(q+2)}$ the $q+1$-rank matrix given by 
			
			\begin{equation}\label{defA1}
			A_1=	\begin{pmatrix}
			\phi(\eta_{q+2}) & 0&0& 0&\cdots&0 &\eta_1\phi'(\eta_{q+2})\\
			0 & \phi(\eta_{q+2}) &0&\cdots&0&0&\eta_2\phi'(\eta_{q+2})\\
			0& 0& \phi(\eta_{q+2}) &0&0 &0& \cdots\\
			\cdots & \cdots& \cdots& \cdots&0 & \cdots & \cdots\\
			\cdots & \cdots& \cdots& \cdots&\cdots & \cdots & \cdots\\
			\cdots & \cdots& \cdots& \cdots&\phi(\eta_{q+2}) &0& \cdots\\
			0 & 0&0& 0&0&\phi(\eta_{q+2}) &\eta_{q+1}\phi'(\eta_{q+2})\\
			\end{pmatrix}.\end{equation}
			Therefore the covariance matrix is positive definite.
		\end{remark}
	The combination of Theorem \ref{proho_generale}, Lemma \ref{parthasarathy} and Lemma \ref{computationTV} allows to prove Proposition \ref{asymptoticRn}.
	
	\begin{proof}[Proof of Proposition \ref{asymptoticRn}] 
		Let
		\begin{align*}
		\bar\gamma &:=\left( \beta^1,\beta^2-\beta^1,\cdots, \beta^{q}-\beta^{q-1},\beta^{q+1}-\beta^{q}\right)\\
		&= \left( \beta^1-\beta^0,\beta^2-\beta^1,\cdots, \beta^{q}-\beta^{q-1},\beta^{q+1}-\beta^{q}\right)= \left( \gamma,\beta^{q+1}-\beta^{q}\right)
		\end{align*}
		with $\beta^0:=0$ by convention. 
		For $j=1,\cdots,q+1$ 
		let us consider the following independent $\mathbb{R}^{2}$-valued random vectors
		$$V_{M,j}:=\left(V_{M,j}^1,V_{M,j}^2\right) =\dfrac{1}{\sqrt{\bar\gamma_jM}}  \sum_{i=\beta_M^{j-1}+1}^{\beta_M^{j}}\left( Y_i-m_Y,Z_i-m_Z\right)  $$
		and let us rewrite  $\left(\frac{R_{M}- m_{Y} \phi(\beta^{q+1}m_{Y})M\gamma }{\sqrt{M}},K_M \right)$  in terms of $V_M:=(V_{M,j})_{1\leq j\leq q+1}$. One has 
		\begin{align*}
		\left(\frac{R_{M}- m_{Y} \phi(\beta^{q+1}m_{Y})M\gamma }{\sqrt{M}},K_M \right)= \left(\left( \frac{R_{\beta_M^{\ell-1:\ell}}- m_{Y} \phi(\beta^{q+1}m_{Y})\bar\gamma_\ell M }{\sqrt{M}}\right)_{1\leq \ell\leq q},K_M \right) =g_M\left( V_M\right) 
		\end{align*}
		with the component $g_{M,\ell}$ of $g_M:\mathbb{R}^{2(q+1)} \rightarrow \mathbb{R}^{q+1}$ for $\ell=1,\cdots,q$  given by:  $\forall c=\left( c_j^1,c_j^2\right)_{1\leq j \leq q+1} \in \mathbb{R}^{2(q+1)}$
		\begin{align} \label{g_M1} g_{M,\ell}(c)=\frac{\phi\left(  \frac{m_Y\beta^{q+1}_M}{M}+\frac{x}{M} + \frac{\sum\limits_{j=1}^{q+1}\sqrt{\bar\gamma_{j}}c_j^1}{\sqrt{M}}\right)\left(\sqrt{\bar\gamma_{\ell}M } c_{\ell}^1+m_Y\left(\beta^{\ell}_M-\beta^{\ell-1}_M \right) \right)-m_{Y}\phi(\beta^{q+1}m_{Y})\bar{\gamma}_\ell M}{\sqrt{M}},   
		\end{align}
		and  $\forall c=\left( c_j^1,c_j^2\right)_{1\leq j \leq q+1} \in \mathbb{R}^{2(q+1)}$
		
		\begin{align} \label{g_M2}
		g_{M,q+1}(c)=\frac{\phi\left(\frac{m_Y\beta^{q+1}_M}{M}+\frac{x}{M}+ \frac{\sum\limits_{j=1}^{q+1}\sqrt{\bar\gamma_{j}}c_j^1}{\sqrt{M}}\right) \left(\sqrt{M} \sum\limits_{j=1}^{q+1}\sqrt{\bar\gamma_{j}}c_j^2+m_Z\beta^{q+1}_M\right)-m_Z\phi\left( \beta^{q+1}m_Y\right)\beta^{q+1}M}{\sqrt{M}}.
		\end{align} 
		\\
		The purpose now is to ``asymptotically rewrite''  the vector $\left(\frac{R_{M}- m_{Y} \phi(\beta^{q+1}m_{Y})\gamma M }{\sqrt{M}},K_M \right)=g_M(V_M)$ in terms of a normal random vector so to apply Lemma \ref{computationTV}.
		By recalling that we denote by $\Sigma$ the covariance matrix of $(Y_i,Z_i)$, let $G=\left( G_1,\cdots, G_{q+1}\right)$ where the two dimensional vectors $G_{j}=\left(G_{j}^1,G_{j}^2 \right)$ $1\leq j\leq q+1$ are i.i.d. according to $ \mathcal{N}\left(0,\Sigma \right) $ .
		By the independence of the random vectors $V_{M,j}$, the independence of the random vectors $G_{j}$ and the well known fact that $d_{TV}\left(\prod_{i=1}^{\ell}\tilde{\nu}_i,\prod_{i=1}^{\ell}\nu_i \right)\leq \sum_{i=1}^{\ell}d_{TV}\left(\tilde{\nu}_i,\nu_i \right)$ with $\tilde{\nu}_i,\nu_i$ probability measures for $i=1,\cdots, \ell$, we have  
		
		\begin{equation} \label{convvector}
		d_{TV}(\mu_{V_M},\mu_{G})\leq \sum_{j=1}^{q+1} d_{TV}(\mu_{V_{M,j}},\mu_{G_{j}}).
		\end{equation}
		
		We are now going to prove that the right-hand side converges to $0$ as $M\rightarrow \infty$. 	Let us observe that for $j=1,\cdots,q+1$ 
		\begin{align} 
		V_{M,j}=& \frac{\sqrt{\beta_M^{j}-\beta_M^{j-1}}}{\sqrt{\bar\gamma_j M}}\times  \frac{1}{\sqrt{\beta_M^{j}-\beta_M^{j-1}}} \sum_{i=\beta_M^{j-1}+1}^{\beta_M^{j}}\left(  Y_i-m_Y,Z_i-m_Z\right) \label{Wn}
		\end{align}
		
		with $\lim_{M\rightarrow \infty}\dfrac{\sqrt{\beta_M^{j}-\beta_M^{j-1}}}{\sqrt{\bar\gamma_j M}} =1 $.
		
		Thanks to Theorem \ref{proho_generale} and the hypothesis that the law of $\left(Y_i,Z_i \right)$ has an absolutely continuous component, $\forall j=1,\cdots,q+1$  we have that the law of $\frac{1}{\sqrt{\beta_M^{j}-\beta_M^{j-1}}} \sum\limits_{i=\beta_M^{j-1}+1}^{\beta_M^{j}}\left( Y_i-m_Y,Z_i-m_Z\right)$ converges in total variation as $M\rightarrow \infty$ to the law of $G_{j}$
		and, by Lemma \ref{parthasarathy} and (\ref{Wn}), this implies 
		\begin{align*}
		\lim_{M\rightarrow \infty}d_{TV}\left(\mu_{V_{M,j}},\mu_{G_{j}}\right)=0, \quad j=1,\cdots,q+1.
		\end{align*}
		
		Hence we have proved the right-hand side of (\ref{convvector}) converges to $0$ as $M\rightarrow \infty$.
		Applying (\ref{pushforward!}) with $g=g_M$, we deduce that 
		\begin{align*}
		&d_{TV}\left( \mu_{g_{M}\left( V_M\right)},\mu_{g_{M}\left( G\right )} \right)\leq d_{TV}(\mu_{V_M},\mu_{G})\underset{M\rightarrow \infty}{\longrightarrow} 0  .
		\end{align*}
		
		We are now going to prove the convergence in total variation of the law of $g_{M}\left( G\right)$ and to do so we apply Lemma \ref{computationTV}.
		Recalling that  $G_{j}\sim \mathcal{N}\left(0,\Sigma \right) $ where $\Sigma$ is positive definite, $G_{j}$ i.i.d., it is possible to prove that the random variables $\sum\limits_{j=1}^{q+1}\sqrt{\bar\gamma_{j}}G_j^1,\sum\limits_{j=1}^{q+1}\sqrt{\bar\gamma_{j}}G_j^2,\sqrt{\bar{\gamma}_1}G_1^1,\cdots,\sqrt{\bar{\gamma}_q}G_q^1$ are linearly independent.

		Thus by using that $\lim\limits_{M\rightarrow \infty}\sqrt{M}\left( \frac{\beta_M^i}{M}- \beta^i\right) =0$ for $i=1,\cdots,q+1$ and  (\ref{dominateconvergence1}), we can apply Lemma \ref{computationTV} with 
		
		\begin{itemize}
			\item $(\eta_{M,1},\cdots,\eta_{M,q},\eta_{M,q+1}, \eta_{M,q+2})= \left(m_Y\left(\beta^{1}_M-\beta^{0}_M\right) ,\cdots,m_Y\left(\beta^{q}_M-\beta^{q-1}_M\right) ,m_Z \beta^{q+1}_M,\frac{m_Y\beta^{q+1}_M}{M}+\frac{x}{M}\right) $ \\ and  $(\eta_1,\cdots,\eta_{q},\eta_{q+1},\eta_{q+2}) = \left(m_Y\bar{\gamma}_1,\cdots,m_Y\bar{\gamma}_q,m_Z\beta^{q+1},m_Y\beta^{q+1} \right) $ 
			\item  $\left(W_1,\cdots,W_{q+2} \right)= \left(\sqrt{\bar{\gamma}_1}G_1^1,\cdots,\sqrt{\bar{\gamma}_q}G_q^1 ,\sum\limits_{j=1}^{q+1}\sqrt{\bar\gamma_{j}}G_j^2,\sum\limits_{j=1}^{q+1}\sqrt{\bar\gamma_{j}}G_j^1\right)$ 
		\end{itemize} 
		
		to conclude that the law of $g_M(G)$ converges in total variation as $M\rightarrow \infty$ to the law of $Y\sim \mathcal{N}\left(0,\Gamma \right)$ where $\Gamma\in \mathbb{R}^{(q+1)\times(q+1)}$ is positive definite by Remark \ref{defin.posit}. 
		
	\end{proof}
	The following lemma contains the key result to prove Lemma \ref{computationTV} . Its proof is provided in the Appendix.
	\begin{lemma}
		
		\label{decomposition}
		Let $(Y_M)_{M\geq 1}$ be a sequence of  $\mathbb{R}^d$-valued random vectors such that 
		
		\begin{equation} \label{conv.density}
		\liminf_{M\rightarrow \infty}p_{Y_M}(x)\geq p(x)\quad dx\,\,a.e. 
		\end{equation}
		with $p$ the density of an absolutely continuous $\mathbb{R}^d$-valued random variable $Y$. Then 
		
		\begin{enumerate}
			\item $\lim_{M\rightarrow \infty}\int_{\mathbb{R}^d}\left|  p_{Y_M}(x)-p(x)\right| dx =0$
			\item $\lim_{M\rightarrow \infty}d_{TV}\left(\mu_{Y_M},\mu_{Y} \right)=0.$ 
		\end{enumerate}
	\end{lemma}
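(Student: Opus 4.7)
The statement is a generalized Scheffé-type lemma: the hypothesis only gives one-sided control, $\liminf_M p_{Y_M}\geq p$, and the $Y_M$ need not even be absolutely continuous, so I must handle both the possible mass deficit of the absolutely continuous part of $\mu_{Y_M}$ and the failure of pointwise density convergence. My strategy is to first extract mass convergence from Fatou's lemma, then upgrade the one-sided bound to $L^1$ convergence of densities via dominated convergence applied to the negative part $(p-p_{Y_M})^+$, and finally derive the total variation bound by combining these two facts.

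For the mass step, Fatou's lemma applied to the nonnegative functions $p_{Y_M}$ gives
\begin{equation*}
1 = \int_{\mathbb{R}^d} p(x)\,dx \leq \int_{\mathbb{R}^d} \liminf_{M\to\infty} p_{Y_M}(x)\,dx \leq \liminf_{M\to\infty}\int_{\mathbb{R}^d} p_{Y_M}(x)\,dx,
\end{equation*}
and since $\int p_{Y_M}\,dx = \mu_{Y_M,c}(\mathbb{R}^d) \leq 1$, we obtain $\int p_{Y_M}\,dx \to 1$, or equivalently $\mu_{Y_M,s}(\mathbb{R}^d)\to 0$. For the density step, set $f_M = (p-p_{Y_M})^+$. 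The hypothesis gives $f_M(x)\to 0$ for a.e.\ $x$, and $0\leq f_M\leq p\in L^1(\mathbb{R}^d)$, so dominated convergence yields $\int f_M\,dx\to 0$. Exploiting the pointwise identity $(p_{Y_M}-p)^+ = (p_{Y_M}-p) + (p-p_{Y_M})^+$ and integrating,
\begin{equation*}
\int_{\mathbb{R}^d} |p_{Y_M}(x)-p(x)|\,dx = \Bigl(\int p_{Y_M}\,dx - 1\Bigr) + 2\int f_M\,dx \longrightarrow 0,
\end{equation*}
which proves (1).

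For (2), writing $\mu_{Y_M} = \mu_{Y_M,c} + \mu_{Y_M,s}$ and $\mu_Y = p\cdot dx$ yields, for any Borel set $A\subset\mathbb{R}^d$,
\begin{equation*}
|\mu_{Y_M}(A) - \mu_Y(A)| \leq \int_A |p_{Y_M}-p|\,dx + \mu_{Y_M,s}(A) \leq \int_{\mathbb{R}^d}|p_{Y_M}-p|\,dx + \mu_{Y_M,s}(\mathbb{R}^d),
\end{equation*}
and the right-hand side is independent of $A$ and tends to $0$ by the two previous steps. Taking the supremum over $A$ gives (2). I do not foresee a real obstacle here: the only subtlety is that a possible singular part of $\mu_{Y_M}$ forces the mass-convergence step to be carried out explicitly, and this is precisely what compensates the fact that $\int p_{Y_M}\,dx$ may be strictly less than $1$ along the sequence.
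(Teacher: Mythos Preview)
Your proof is correct and follows essentially the same route as the paper's: Fatou's lemma to force $\int p_{Y_M}\to 1$ and hence $\mu_{Y_M,s}(\mathbb{R}^d)\to 0$, dominated convergence on $(p-p_{Y_M})^+\le p$ to kill the negative part, the algebraic identity to recover full $L^1$ convergence, and the same $\int|p_{Y_M}-p|+\mu_{Y_M,s}(\mathbb{R}^d)$ bound for total variation. The only cosmetic difference is that the paper writes the decomposition as $\int|p-p_{Y_M}|=2\int(p-p_{Y_M})^+-\int(p-p_{Y_M})$, which is identical to yours after a sign flip.
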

	\begin{proof}[Proof of Lemma \ref{computationTV}]
		
		According to Lemma \ref{decomposition}, to prove the convergence in total variation of the law of the random vector (\ref{vector1}) it is enough to check that its density with respect to the Lebesgue measure converges pointwise to that of its limit.
		
		Let us preliminary observe that the density of the zero-mean normal vector $(W_{1},\cdots,W_{q+2})$ is given by 	 
		
		\begin{equation}\label{density}
		p(x_{1:q+2}) = \frac{1}{(2\pi)^{\frac{q+2}{2}}\times det(\Sigma_1)^{1/2}}e^{-\frac{1}{2}x_{1:q+2}^T\Sigma_1^{-1}x_{1:q+2}},\quad x_{1:q+2}\in \mathbb{R}^{q+2}
		\end{equation}
		where we denote the vector $(x_1,\cdots,x_{q+2})$ by $x_{1:q+2}$. 
		Let us moreover observe that by the property of positive definiteness of $\Sigma_1^{-1}$, the smallest eigenvalue $\lambda_1$ of $\Sigma_1^{-1}$ is positive and we have 
		
		\begin{equation}\label{defpos_prop}
		p(x_{1:q+2})  \leq \frac{1}{(2\pi)^{\frac{q+2}{2}}\times det(\Sigma_1)^{1/2}} e^{-\frac{1}{2}\lambda_1\left( \sum\limits_{i=1}^{q+2}x_i^2\right) }, \quad x_{1:q+2}\in \mathbb{R}^{q+2}.
		\end{equation}
		
		Let us now compute the density of (\ref{vector1}). Let $f:\mathbb{R}^{q+1} \rightarrow \mathbb{R}_+$ be a non-negative
		measurable function. Writing the expectation

		\begin{equation}\label{expectation1}
		\mathbb{E}\left( f	\left(\left(\frac{\phi\left( \frac{\eta_{M,q+2}}{M}+\frac{W_{q+2}}{\sqrt{M}}\right)\left(\sqrt{M}W_i+\eta_{M,i} \right) -M\phi (\eta_{q+2})\eta_i   }{\sqrt{M}} \right)_{1\leq i \leq q+1} \right)  \right)
		\end{equation}
		as an integral with respect to the density $p$ and then applying  the change of variable 
		
		\begin{align*}
		\xi_i=\frac{\phi\left( \frac{\eta_{M,q+2}}{M}+\frac{x_{q+2}}{\sqrt{M}}\right)\left(\sqrt{M}x_i+\eta_{M,i} \right) -M\phi (\eta_{q+2})\eta_{i}   }{\sqrt{M}}\quad i=1,\cdots,q+1
		\end{align*}
		
		with inverse
		
		\begin{equation}
		x_{M,i}:= \frac{1}{\sqrt{M}}\left(\frac{\sqrt{M}\xi_i+M\phi (\eta_{q+2})\eta_{i}}{\phi\left( \frac{\eta_{M,q+2}}{M}+\frac{x_{q+2}}{\sqrt{M}}\right)}-\eta_{M,i}\right) \quad i=1,\cdots,q+1
		\end{equation}
		for $x_{{q+2}}\in \mathbb{R}$ outside the set $\left\lbrace t\in \mathbb{R} : \phi\left( \frac{\eta_{M,q+2}}{M}+\frac{t}{\sqrt{M}}\right) =0 \right\rbrace $ which is Lebesgue negligible since by a change of variable and (\ref{dominateconvergence})
		\begin{align*}
		\int_{\mathbb{R}}\frac{1}{\left| \phi\left( \frac{\eta_{M,q+2}}{M}+\frac{t}{\sqrt{M}}\right)\right|^q+1}e^{-\tilde{M}\left( \frac{\eta_{M,q+2}}{M}-\eta_{q+2}+\frac{t}{\sqrt{M}}\right) ^2}dt	=\sqrt{M}\int_{\mathbb{R}}\frac{1}{\left|\phi(\eta_{q+2}+y) \right|^{q+1}}e^{-\tilde{M}  y^2}dy < \infty,
		\end{align*}
		we get
		\begin{align*}
		&\int_{\mathbb{R}}dx_{q+2}\int_{\mathbb{R}^{q+1}}\frac{1}{\left|\phi\left( \frac{\eta_{M,q+2}}{M}+\frac{x_{q+2}}{\sqrt{M}}\right)\right| ^{q+1} } f(\xi_{1:q+1}) p\left(x_{M,1},\cdots,x_{M,q+1},x_{q+2}\right)d\xi_{1:q+1}.
		\end{align*}
		We have therefore obtained that the density of $\left(\frac{\phi\left( \frac{\eta_{M,q+2}}{M}+\frac{W_{q+2}}{\sqrt{M}}\right)\left(\sqrt{M}W_i+\eta_{M,i} \right) -M\phi (\eta_{q+2})\eta_i   }{\sqrt{M}} \right)_{1\leq i \leq q+1}$  at  $\xi \in \mathbb{R}^{q+1}$ is given by
		\begin{align}
		&\int_{\mathbb{R}}\frac{1}{\left|\phi\left( \frac{\eta_{M,q+2}}{M}+\frac{x_{q+2}}{\sqrt{M}}\right)\right| ^{q+1} }  p\left(x_{M,1},\cdots,x_{M,q+1},x_{q+2}\right)dx_{q+2}. \label{densityvector}
		\end{align} 
		Since, by hypothesis, $\phi$ is continuous at $\eta_{q+2}$ with  $\phi(\eta_{q+2})\neq 0$, there exists $\delta > 0$ such that $\forall t\in \mathbb{R}\, : \left|t\right|\leq \delta$
		\begin{equation}\label{continuitypsi}
		\left|\phi(\eta_{q+2}+t) \right|\geq \frac{\left|\phi(\eta_{q+2}) \right|}{2}.
		\end{equation}
		
		Let us now study the pointwise convergence of (\ref{densityvector}). Let $M\geq 1$ and let us rewrite the integral as  
		\begin{align}
		\int_{\mathbb{R}}\frac{1}{\left|\phi\left( \frac{\eta_{M,q+2}}{M}+\frac{x_{q+2}}{\sqrt{M}}\right)\right| ^{q+1} }  p\left(x_{M,1:q+1},x_{q+2}\right)dx_{q+2}&= \int_{\left|x_{q+2}\right|\leq \frac{\delta}{2}\sqrt{M}} \frac{1}{\left|\phi\left( \frac{\eta_{M,q+2}}{M}+\frac{x_{q+2}}{\sqrt{M}}\right)\right| ^{q+1} }  p\left(x_{M,1:q+1},x_{q+2}\right)dx_{q+2}\label{doubleint1}\\
		&\phantom{==}+ \int_{\left|x_{q+2}\right|> \frac{\delta}{2}\sqrt{M}}\frac{1}{\left|\phi\left( \frac{\eta_{M,q+2}}{M}+\frac{x_{q+2}}{\sqrt{M}}\right)\right| ^{q+1} }  p\left(x_{M,1:q+1},x_{q+2}\right)dx_{q+2}. \label{doubleint1.1}
		\end{align}\\
		
		Let us start studying the convergence as $M\rightarrow \infty$ of (\ref{doubleint1}). Thanks to  (\ref{continuitypsi})  and (\ref{defpos_prop}) and for $M$ big enough so that $ \left|\frac{\eta_{M,q+2}}{M}-\eta_{q+2}\right|  \leq \frac{\delta}{2}$, we have
		\begin{align*}
		&1_{\left\lbrace \left|x_{q+2}\right|\leq \frac{\delta}{2} \sqrt{M}\right\rbrace }\frac{1}{\left|\phi\left( \frac{\eta_{M,q+2}}{M}+\frac{x_{q+2}}{\sqrt{M}}\right)\right| ^{q+1} }  p\left(x_{M,1:q+1},x_{q+2}\right)\leq\frac{2^{q+1}}{\left( 2\pi\right) ^{\frac{q+2}{2}}\times det(\Sigma_1)^{\frac{1}{2}}\left|\phi(\eta_{q+2}) \right|^{q+1}}e^{-\frac{1}{2}\lambda_1 x_{q+2}^2}
		\end{align*}
		and since by hypothesis  $\phi$ is continuous at $\eta_{q+2}$, for each $x_{q+2}\in \mathbb{R}$ we have
		
		\begin{equation} 
		\phi\left( \frac{\eta_{M,q+2}}{M}+ \frac{x_{q+2}}{\sqrt{M}}\right)  \underset{M\rightarrow \infty}{\longrightarrow} \phi(\eta_{q+2}).
		\end{equation}
		
		Moreover for each $x_{q+2}\in \mathbb{R}$, using that  $\lim\limits_{M\rightarrow \infty} \sqrt{M}\left( \frac{\eta_{M,i}}{M}-\eta_i\right) =0$  for $i=1,\cdots,q+2$ and the hypothesis of differentiability of $\phi$ one has
		\begin{align} \nonumber
		x_{M,i} &= \frac{1}{\sqrt{M}}\left(\frac{\sqrt{M}\xi_i+M\phi (\eta_{q+2})\eta_{i}}{\phi\left( \frac{\eta_{M,q+2}}{M}+\frac{x_{q+2}}{\sqrt{M}}\right)}-\eta_{M,i}\right)\\\nonumber
		&=\left(\frac{\xi_i+\phi\left(\eta_{q+2}\right) \sqrt{M}\left(\eta_i-\frac{\eta_{M,i} }{M} \right) -\phi'\left(\eta_{q+2}\right)\frac{\eta_{M,i}}{\sqrt{M}}\left(\frac{x_{q+2}}{\sqrt{M}}+\frac{\eta_{M,q+2}}{M} -\eta_{q+2}\right)  +o(1)}{\phi(\eta_{q+2})+\phi'(\eta_{q+2})\left(\frac{x_{q+2}}{\sqrt{M}}+\frac{\eta_{M,q+2}}{M}-\eta_{q+2} \right) +o\left( \frac{1}{\sqrt{M}}\right)} \right)\\ 
		&\phantom{=}\underset{M\rightarrow \infty }{\longrightarrow}\frac{\xi_i-\phi'\left(\eta_{q+2}\right)\eta_{i}x_{q+2}}{\phi(\eta_{q+2})} . \label{limite1}
		\end{align}

		We can therefore apply Lebesgue's theorem to (\ref{doubleint1}) and obtain that for each $\xi\in\mathbb{R}^{q+1}$\\
		\begin{align*}
		&\lim_{M\rightarrow \infty} \int_{\left|x_{q+2}\right|\leq \frac{\delta}{2}\sqrt{M}} \frac{1}{\left|\phi\left( \frac{\eta_{M,q+2}}{M}+\frac{x_{q+2}}{\sqrt{M}}\right)\right| ^{q+1} }  p\left(x_{M,1:q+1},x_{q+2}\right)dx_{q+2}\\
		& = \frac{1}{ \left| \phi\left( \eta_{q+2}\right) \right| ^{q+1} }\int_{\mathbb{R}} p\left(\left(\frac{\xi_i-\phi'\left(\eta_{q+2}\right)\eta_{i}x_{q+2}}{\phi(\eta_{q+2})} \right)_{1\leq i \leq q+1},x_{q+2}\right)dx_{q+2}\\
		&=\int_{\mathbb{R}} p_{\left(\left(\phi(\eta_{q+2})W_{i}+\phi'(\eta_{q+2})\eta_iW_{q+2} \right)_{1\leq i \leq q+1},W_{q+2} \right)} (\xi_{1:q+1},x_{q+2})dx_{q+2} \\
		&= p_{\left(\left(\phi(\eta_{q+2})W_{i}+\phi'(\eta_{q+2})\eta_iW_{q+2} \right)_{1\leq i \leq q+1} \right)} (\xi_{1:q+1}).
		\end{align*}
		
		Let us now prove that (\ref{doubleint1.1}) converges to $0$ as $M\rightarrow \infty$.  By applying a change of variable and by choosing $M$ big enough so that $\left|\frac{\eta_{M,q+2}}{M}-\eta_{q+2} \right|\leq \frac{\delta}{4} $, we obtain 
		\begin{align*}
		\int_{\left|x_{q+2}\right|> \frac{\delta}{2}\sqrt{M}}\frac{1}{\left|\phi\left( \frac{\eta_{M,q+2}}{M}+\frac{x_{q+2}}{\sqrt{M}}\right)\right| ^{q+1} }&  p\left(x_{M,1:q+1},x_{q+2}\right)dx_{q+2}= \int_{\left|y\right|> \frac{\delta}{2}} \frac{\sqrt{M}}{\left| \phi\left(  \frac{\eta_{M,q+2}}{M}+y\right)\right| ^{q+1}} p\left(x_{M,1:q+1},\sqrt{M} y\right) dy\\
		&\leq \int_{\left|z\right|> \frac{\delta}{4}} \frac{\sqrt{M}}{\left| \phi\left( \eta_{q+2}+ z\right)\right| ^{q+1}} p\left( x_{M,1:q+1},\sqrt{M} z+\sqrt{M}\left(\eta_{q+2}-\frac{\eta_{M,q+2}}{M} \right)\right)dz.
		\end{align*}
		Therefore by (\ref{defpos_prop}) and by using that $\forall x_1,x_2\in \mathbb{R}$ $(x_1-x_2)^2 \geq \frac{x_1^2}{2}-x_2^2$, one has 
		\begin{align*}
		&1_{\left\lbrace|z|>\frac{\delta}{4} \right\rbrace }  \frac{\sqrt{M}}{\left| \phi\left( \eta_{q+2}+ z\right)\right| ^{q+1}} p\left( x_{M,1:q+1},\sqrt{M} z+\sqrt{M}\left(\eta_{q+2}-\frac{\eta_{M,q+2}}{M} \right)\right) \\
		&\leq 1_{\left\lbrace|z|>\frac{\delta}{4} \right\rbrace }\frac{\sqrt{M}}{\left( 2\pi\right) ^\frac{q+2}{2} \times det(\Sigma_1)^\frac{1}{2}\left| \phi\left(\eta_{q+2}+ z\right)\right| ^{q+1}}e^{-\frac{1}{2}\lambda_1 \left(\sqrt{M} z-\sqrt{M}\left(\frac{\eta_{M,q+2}}{M} -\eta_{q+2}\right)  \right)^2 }\\
		&\leq 1_{\left\lbrace|z|>\frac{\delta}{4} \right\rbrace }\frac{\sqrt{M}}{\left( 2\pi\right) ^\frac{q+2}{2} \times det(\Sigma_1)^\frac{1}{2}\left| \phi\left( \eta_{q+2}+ z\right)\right| ^{q+1}}e^{-\frac{1}{4}\lambda_1 Mz^2 } e^{\frac{1}{2}\lambda_1 \left(\sqrt{M}\left(\frac{\eta_{M,q+2}}{M}-\eta_{q+2} \right)  \right)^2 } \\
		&\leq C1_{\left\lbrace|z|>\frac{\delta}{4} \right\rbrace }\frac{\sqrt{M}}{\left( 2\pi\right) ^\frac{q+2}{2} \times det(\Sigma_1)^\frac{1}{2}\left| \phi\left( \eta_{q+2}+ z\right)\right| ^{q+1}}e^{-\frac{1}{4}\lambda_1 Mz^2 } 
		\end{align*}
		where for the last inequality we use that the sequence  $\left( \sqrt{M}\left(\frac{\eta_{M,q+2}}{M}-\eta_{q+2} \right)\right)_{M\geq 1}$  converges to $0$ and so in particular $e^{\frac{1}{2}\lambda_1 \left(\sqrt{M}\left(\frac{\eta_{M,q+2}}{M}-\eta_{q+2} \right)  \right)^2 }$ is bounded by a positive constant $C< \infty$. The right-hand side converges to $0$ as $M\rightarrow\infty$.\\
		Let us now observe that for $|z|>\frac{\delta}{4}$ and $t\geq \frac{32}{\delta^2\lambda_1}$,  $$\partial_t(\sqrt{t}e^{-\frac{1}{4}\lambda_1 tz^2})=\frac{e^{-\frac{1}{4}\lambda_1 z^2t}}{4} \bigg(\frac{2}{\sqrt{t}}-z^2\sqrt{t}\lambda_1\bigg)\leq 0.$$
		Therefore if $M\geq \bar{M}:=\max \left( \left\lceil\frac{4\tilde{M}}{\lambda_1}\right\rceil,\left\lceil\frac{32}{\delta^2\lambda_1}\right\rceil\right) $, where $\tilde{M}$  is defined in the statement of the Lemma \ref{computationTV},
		
		\begin{align}\label{magg}
		1_{\left\lbrace|z|>\frac{\delta}{4} \right\rbrace }\frac{\sqrt{M}}{\left| \phi\left(\eta_{q+2}+ z\right)\right| ^{q+1}}e^{-\frac{1}{4}\lambda_1 Mz^2 }  &\leq 1_{\left\lbrace|z|>\frac{\delta}{4} \right\rbrace }\frac{\sqrt{\bar{M}}}{\left| \phi\left( \eta_{q+2}+ z\right)\right| ^{q+1}}e^{-\frac{1}{4}\lambda_1 \bar{M}z^2}\\
		&\leq  1_{\left\lbrace|z|> \frac{\delta}{4} \right\rbrace }\frac{\sqrt{\bar{M}}}{\left| \phi\left( \eta_{q+2}+ z\right)\right| ^{q+1}}e^{-\tilde{M} z^2} \label{domin}
		\end{align}  
		where the last term is integrable by hypothesis (\ref{dominateconvergence}).   
		By Lebesgue's theorem  we can therefore conclude that $\int_{\left|z\right|> \frac{\delta}{4}} \frac{\sqrt{M}}{\left| \phi\left( \eta_{q+2}+ z\right)\right| ^{q+1}} p\left( x_{M,1:q+1},\sqrt{M} z+\sqrt{M}\left(\eta_{q+2}-\frac{\eta_{M,q+2}}{M} \right)\right)dz$ tends to $0$ as $M\rightarrow \infty$.\\ In conclusion, we have obtained that  the density of $ \left(\frac{\phi\left( \frac{\eta_{M,q+2}}{M}+\frac{W_{q+2}}{\sqrt{M}}\right)\left(\sqrt{M}W_i+\eta_{M,i} \right) -M\phi (\eta_{q+2})\eta_i   }{\sqrt{M}} \right)_{1\leq i \leq q+1} $  converges pointwise as $M\rightarrow \infty$ to 
		the density of $\left(\left(\phi(\eta_{q+2})W_{i}+\phi'(\eta_{q+2})\eta_iW_{q+2} \right)_{1\leq i \leq q+1} \right)$. 
	\end{proof}
We are now going to prove Theorem \ref{weyl.criterion}.

\begin{proof}[Proof of Theorem \ref{weyl.criterion}]
	We denote $\left(\left\lbrace B_{M}\right\rbrace ,H_M \right):= \left(\left\lbrace B^1_{M}\right\rbrace ,\cdots, \left\lbrace B^q_{M}\right\rbrace, H_M \right)$. Let us first observe that for every $k\in \mathbb{Z}^q, u \in \mathbb{R}^{q'}$  one has
	\begin{align}\label{proprieta}
	\phi_{\left(\left\lbrace B_{M}\right\rbrace ,H_M \right)}\left(2\pi k,u \right) = \mathbb{E}\left(e^{i2\pi k^T\left\lbrace B_{M}\right\rbrace} e^{iu^TH_M}\right)  =  \mathbb{E}\left(e^{i2\pi k^T B_{M}} e^{-i2\pi\sum\limits_{j=1}^q k_j \floor*{B_{M}^j}} e^{iu^TH_M}\right) = \phi_{\left( B_{M} ,H_M \right)}\left(2\pi k,u \right).
	\end{align}
	
	If $\left(\left\lbrace B_{M}\right\rbrace ,H_M \right)$ converges in distribution as $M\rightarrow \infty$ to $\left(U,H \right) $ where $U$ is independent of $H$ and uniformly distributed on $\left[0,1 \right]^q $, $\forall (k,u):=(k_1,\cdots,k_q,u)\in \left\lbrace \mathbb{Z}^q\setminus \left\lbrace 0\right\rbrace \right\rbrace  \times \mathbb{R}^{q'}$ one has 
	\begin{align} \label{Znumber}
	\lim_{M\rightarrow \infty} \phi_{\left( B_{M} ,H_M \right)}\left(2\pi k,u \right) = \lim_{M\rightarrow \infty}\phi_{\left(\left\lbrace B_{M}\right\rbrace ,H_M \right)}\left(2\pi k,u \right)= \phi_{\left(U ,H \right)}\left(2k\pi,u \right) = \mathbb{E}\left( e^{i2\pi k U}\right) \mathbb{E}\left(e^{iuH} \right)=0.
	\end{align}
	
	Conversely, assume that (\ref{suffcond}) holds for every $(k,u)\in \left\lbrace \mathbb{Z}^q\setminus \left\lbrace 0\right\rbrace \right\rbrace  \times \mathbb{R}^{q'}$ and let us prove that $\forall (u_1,u_2)\in  \mathbb{R}^q  \times \mathbb{R}^{q'}$, 
	\begin{align*}
	\lim_{M\rightarrow \infty}\phi_{\left(\left\lbrace B_{M}\right\rbrace ,H_M\right)}(u_1,u_2)=\phi_{U}(u_1)\phi_{H}(u_2)
	\end{align*}
	with $(U,H)$ as above.\\
	Fix $(u_1,u_2)\in \left\lbrace \mathbb{R}^q\setminus \left\lbrace 0\right\rbrace\right\rbrace  \times \mathbb{R}^{q'}$. Given $0 <\epsilon < \frac{1}{4}$, let us define for $j=1,\cdots,q$ a complex $\lceil\frac{q+1}{2} \rceil$-times continuously differentiable function $\phi_{\epsilon,j}$ defined on $\left[ 0,1\right]$  that coincides with $x_j\mapsto e^{iu_1^jx_j}$ on  $\left[0,1-\epsilon \right]$ and such that $\phi_{\epsilon,j}(1)=\phi_{\epsilon,j}(0)$.
	It is possible to choose $\phi_{\epsilon,j}$ such that $\sup_{x\in \left[0,1 \right] }\left| \phi_{\epsilon,j}(x)\right| \leq C^{\frac{1}{q}}$, with $C< \infty$ not depending on $\epsilon$ and $j$. If we now consider the function  $\phi_{\epsilon}\left(x_1,\cdots,x_q \right):=\phi_{\epsilon,1}\left(x_1 \right)\cdot\cdot\cdot\phi_{\epsilon,q}\left(x_q \right) $ $\forall \left(x_1,\cdots,x_q \right) \in \left[ 0,1\right]^q$, then it is  $\lceil\frac{q+1}{2} \rceil$-times continuously differentiable, coincides with $\left(x_1,\cdots,x_q \right)\mapsto e^{iu_1^1x_1}\cdots e^{iu_1^qx_q}$ on $\left[0,1-\epsilon \right]^q$ and   $\phi_{\epsilon}(x_1,\cdots,x_{j-1},0,x_{j+1},\cdots,x_q)=\phi_{\epsilon}(x_1,\cdots,x_{j-1},1,x_{j+1},\cdots,x_q)$ $\forall j,$ $\forall \left(x_1,\cdots,x_{j-1},x_{j+1},\cdots,x_q \right) \in \left[ 0,1\right]^{q-1}$.   By the theory of Fourier's series (see for instance the Section Sobolev Spaces in Chapter 5 of \cite{Roe}), there exists $M^\epsilon\in \mathbb{N}$  such that 
	
	\begin{equation} \label{fourier}
	\sup_{x\in \left[0,1 \right]^q }\left|\phi_\epsilon(x) - \sum_{\max\limits_{j=1,\cdots,q}\left|k_j \right| \leq M^\epsilon}^{}c^\epsilon_ke^{i2\pi k^T x}\right|=	\sup_{x\in \left[0,1 \right]^q }\left|\phi_\epsilon(x) - \sum_{\left|k_1 \right| \leq M^\epsilon}^{}\cdots  \sum_{\left|k_q \right| \leq M^\epsilon}^{}c^\epsilon_ke^{i2\pi k^T x}\right| \leq \epsilon
	\end{equation}
	
	where for $k\in \mathbb{Z}^q$
	
	$$c^\epsilon_k = \int_{\left[ 0,1\right]^q}\phi_\epsilon(x)e^{-i2\pi k^T x}dx $$ 
	satisfies $\left| c^\epsilon_k\right| \leq C$.
	Let $\nu_M$ denote the law of $\left(\left\lbrace B_{M}\right\rbrace ,H_M\right) $ (having support in $\left[ 0,1\right]^q \times \mathbb{R}^{q'} $).\\ Given $M\geq 1$  one has
	\begin{align*}
	&\left| \phi_{\left(\left\lbrace B_{M}\right\rbrace ,H_M\right)}(u_1,u_2)- \phi_{U}(u_1)\phi_{H}(u_2) \right|  \\
	&\leq \left| \int_{\left[ 0,1\right]^q \times \mathbb{R}^{q'}} \left( e^{iu_1^Tx}- \phi_{\epsilon}(x)\right)e^{iu_2^Ty} \nu_M(dx,dy) \right|\\
	&\phantom{=}+\left| \int_{\left[ 0,1\right]^q \times \mathbb{R}^{q'}}  \phi_{\epsilon}(x)e^{iu_2^Ty} \nu_M(dx,dy) - \sum_{\max\limits_{j=1,\cdots,q}\left|k_j \right| \leq M^\epsilon}^{}c^\epsilon_k \int_{\left[ 0,1\right]^q \times \mathbb{R}^{q'}}  e^{i2\pi k^T x}e^{iu_2^Ty} \nu_M(dx,dy) \right| \\
	& \phantom{=}+ \left| \sum_{\max\limits_{j=1,\cdots,q}\left|k_j \right| \leq M^\epsilon,k\neq 0}^{}c^\epsilon_k \int_{\left[ 0,1\right]^q \times \mathbb{R}^{q'}}  e^{i2\pi k^Tx}e^{iu_2^Ty} \nu_M(dx,dy)\right|+\left|  c^\epsilon_0\phi_{H_M}(u_2)-c^\epsilon_0\phi_{H}(u_2)\right|\\
	& \phantom{=} +\left|  c^\epsilon_0\phi_{H}(u_2)-\phi_{U}(u_1)\phi_{H}(u_2) \right| \\
	&\leq (1+C)\int_{\left[ 0,1\right]^q\setminus\left[ 0,1-\epsilon\right]^q}\nu_M(dx,\mathbb{R}^{q'})+\epsilon+\left| \sum_{\max\limits_{j=1,\cdots,q}\left|k_j \right| \leq M^\epsilon,k\neq 0}^{}c^\epsilon_k \phi_{(B_M,H_M)}(2 k\pi,u_2) \right|\\
	&\phantom{=}+ C \left| \phi_{H_M}(u_2)-\phi_{H}(u_2)\right| + \left| \phi_{U}(u_1) -c^\epsilon_0 \right|.  
	\end{align*}

	Let us look in more detail at the first term of the right-hand side. Let $\bar{\phi}_\epsilon$ be a real continuously differentiable function   defined on $[0,1]$  that is equal to $0$ on $\left[2\epsilon,1-2\epsilon \right] $  and equal to $1$ on $\left[0,\epsilon \right] $ and  $\left[1-\epsilon,1 \right] $. By the theory of Fourier's series, there exists $\bar{M}^\epsilon\in \mathbb{N}$  such that
	\begin{equation} \label{fourier2}
	\sup_{x\in \left[0,1 \right] }\left|\bar{\phi}_\epsilon(x) - \sum_{\left|\ell \right| \leq \bar{M}^\epsilon}^{}d^\epsilon_\ell e^{i2\pi \ell x}\right|\leq \epsilon
	\end{equation}
	
	where for $\ell\in \mathbb{Z}$
	
	$$d^\epsilon_\ell = \int_{\left[ 0,1\right]}\bar\phi_\epsilon(x)e^{-i2\pi \ell x}dx. $$ 
	One has 
	\begin{align*}
	\int_{\left[ 0,1\right]^q\setminus\left[ 0,1-\epsilon\right]^q} \nu_M(dx,\mathbb{R}^{q'})  &\leq  \sum\limits_{j=1}^{q}\int_{\left[ 1-\epsilon,1\right]} \mu_{\left\lbrace B_M^j\right\rbrace}(dx)\leq  \sum\limits_{j=1}^{q}\int_{\left[ 0,1\right]}\bar{\phi}_\epsilon\left( x\right) \mu_{\left\lbrace B_M^j\right\rbrace}(dx)\\
	& \leq \sum\limits_{j=1}^{q}\int_{\left[ 0,1\right]} \left| \bar{\phi}_\epsilon\left( x\right)- \sum_{\left|\ell \right| \leq \bar{M}^\epsilon}^{}d^\epsilon_\ell e^{i2\pi \ell x}\right|  \mu_{\left\lbrace B_M^j\right\rbrace}(dx)+\sum\limits_{j=1}^{q}\left| \sum_{\left|\ell \right| \leq \bar{M}^\epsilon,\ell\neq 0}^{} d^\epsilon_\ell \int_{\left[ 0,1\right]}e^{i2\pi \ell x}\mu_{\left\lbrace B_M^j\right\rbrace}(dx)\right| \\
	&\phantom{==}+q\left| d^\epsilon_0\right|\\
	&\leq q\epsilon+\sum\limits_{j=1}^{q}\left| \sum_{\left|\ell \right| \leq \bar{M}^\epsilon,\ell\neq 0}^{} d^\epsilon_\ell\phi_{B_M^j}\left( 2\pi\ell\right)\right| +q\left| d^\epsilon_0\right|.
	\end{align*}
	Thus we have obtained that
	
	\begin{align*}
	&\left| \phi_{\left(\left\lbrace B_{M}\right\rbrace ,H_M\right)}(u_1,u_2)- \phi_{U}(u_1)\phi_{H}(u_2) \right|  \\
	&\leq (1+C)\left(q\epsilon+\sum\limits_{j=1}^{q}\left| \sum_{\left|\ell \right| \leq \bar{M}^\epsilon,\ell\neq 0}^{} d^\epsilon_\ell\phi_{B_M^j}\left( 2\pi\ell\right)\right| +q\left|d^\epsilon_0\right| \right) +\epsilon+\left| \sum_{\max\limits_{j=1,\cdots,q}\left|k_j \right| \leq M^\epsilon,k\neq 0}^{}c^\epsilon_k \phi_{(B_M,H_M)}(2 \pi k,u_2) \right|\\
	&\phantom{=}+ C \left| \phi_{H_M}(u_2)-\phi_{H}(u_2)\right| + \left| \phi_{U}(u_1) -c^\epsilon_0 \right|.  
	\end{align*}
	
	Given $0<\epsilon<\frac{1}{4}$, we can first take the superior limit as $M\rightarrow \infty$ of the right-hand side.\\ 
	By (\ref{suffcond}) we have 
	\begin{align*}
	\lim_{M\rightarrow \infty}\sum\limits_{j=1}^{q}\left| \sum_{\left|\ell \right| \leq \bar{M}^\epsilon,\ell\neq 0}^{} d^\epsilon_\ell\phi_{B_M^j}\left( 2\pi\ell\right)\right| = 0 = \lim_{M\rightarrow \infty}\left| \sum_{\max\limits_{j=1,\cdots,q}\left|k_j \right| \leq M^\epsilon,k\neq 0}^{}c^\epsilon_k \phi_{(B_M,H_M)}(2\pi k,u_2) \right|
	\end{align*}
	and by the hypothesis that $H_M\overset{d}{\Rightarrow}H$ we have $\lim\limits_{M\rightarrow \infty}\left| \phi_{H_M}(u_2)-\phi_{H}(u_2)\right|=0$. We therefore obtain 
	\begin{align} \label{afterM}
	\limsup_{M\rightarrow \infty}\left| \phi_{\left(\left\lbrace B_{M}\right\rbrace ,H_M\right)}(u_1,u_2)- \phi_{U}(u_1)\phi_{H}(u_2) \right| \leq q(1+C)\left(  \epsilon+\left| d^\epsilon_0\right| \right)  + \epsilon +\left| \phi_{U}(u_1) -c^\epsilon_0 \right|.
	\end{align}
	
	Now since
	\begin{align*}
	\left|d^\epsilon_0  \right| 	= 	d^\epsilon_0 =  \int_{\left[ 0,1\right]}\bar\phi_\epsilon(x)dx \leq 4\epsilon 
	\end{align*}
	and 
	\begin{align*}
	\left| \phi_{U}(u_1) -c^\epsilon_0 \right|&=\left|\int_{\left[ 0,1\right]^q}(e^{iu_1^Tx}-\phi_\epsilon(x))dx \right|\leq \left(C+1 \right) \int_{\left[ 0,1\right]^q\setminus\left[ 0,1-\epsilon\right]^q}dx
	\end{align*} 
	with  $\int_{\left[ 0,1\right]^q\setminus\left[ 0,1-\epsilon\right]^q} dx= 1-\left(1-\epsilon \right)^q  \underset{\epsilon\rightarrow 0}{\longrightarrow} 0$, the limit as $\epsilon \rightarrow 0$ of the right-hand side of (\ref{afterM}) is $0$.\\
	If $u_1=0$ and $u_2\in \mathbb{R}$, by the hypothesis that $H_M\overset{d}{\Rightarrow}H$, we directly obtain 
	\begin{align*}
	\phi_{\left(\left\lbrace B_{M}\right\rbrace ,H_M\right)}(0,u_2) = \phi_{H_M}(u_2)\underset{M\rightarrow \infty}{\longrightarrow} \phi_{H}(u_2) = \phi_{U}(0)\phi_{H}(u_2).
	\end{align*}.
\end{proof}

\section{Appendix}
\begin{lemma}\label{tvdistancecomplex}
	Given the measures $\mu_1$ and $\mu_2$ on $\mathbb{R}^d$, one has
	\begin{equation}\label{tvcomplex}
	\sup_{\left\|f\right\|_{\infty,\mathbb{C}}\leq 1}\left|\int_{\mathbb{R}^d}f(x)\left( \mu_1(dx)-\mu_2(dx) \right) \right|= 2d_{TV}\left(\mu_1,\mu_2 \right)
	\end{equation}
	where the supremum is taken over the complex-valued measurable functions.
\end{lemma}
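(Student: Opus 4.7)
The plan is to prove the two inequalities between the complex supremum in \eqref{tvcomplex} and $2d_{TV}(\mu_1,\mu_2)$ separately. The inequality $\sup_{\|f\|_{\infty,\mathbb{C}}\le 1}|\int f\,d(\mu_1-\mu_2)| \ge 2d_{TV}(\mu_1,\mu_2)$ is immediate from \eqref{dtvr}: any real-valued measurable function $f$ with $\|f\|_{\infty,\mathbb{R}}\le 1$ is in particular a complex-valued function with $\|f\|_{\infty,\mathbb{C}}\le 1$, so the complex supremum is at least as large as the real one.

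For the reverse inequality, the main idea is a rotation trick that converts a complex-valued test function into a real one without increasing its sup-norm. Fix a complex-valued measurable $f$ with $\|f\|_{\infty,\mathbb{C}}\le 1$. The integral $I=\int_{\mathbb{R}^d}f(x)(\mu_1(dx)-\mu_2(dx))$ is a complex number, so we can write $I=|I|e^{i\theta}$ for some $\theta\in\mathbb{R}$. Then
\begin{equation*}
\left|\int_{\mathbb{R}^d}f(x)(\mu_1-\mu_2)(dx)\right| = e^{-i\theta}I = \int_{\mathbb{R}^d}e^{-i\theta}f(x)(\mu_1-\mu_2)(dx).
\end{equation*}
Since the left-hand side is a nonnegative real number, taking the real part of the right-hand side gives
\begin{equation*}
\left|\int_{\mathbb{R}^d}f(x)(\mu_1-\mu_2)(dx)\right| = \int_{\mathbb{R}^d}g(x)(\mu_1-\mu_2)(dx),\qquad g(x):=\mathrm{Re}\bigl(e^{-i\theta}f(x)\bigr).
\end{equation*}
The function $g$ is real-valued and measurable, and since $|g(x)|\le |e^{-i\theta}f(x)|=|f(x)|\le 1$ we have $\|g\|_{\infty,\mathbb{R}}\le 1$. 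Hence
\begin{equation*}
\left|\int_{\mathbb{R}^d}f(x)(\mu_1-\mu_2)(dx)\right| \le \sup_{\|g\|_{\infty,\mathbb{R}}\le 1}\left|\int_{\mathbb{R}^d}g(x)(\mu_1-\mu_2)(dx)\right| = 2d_{TV}(\mu_1,\mu_2),
\end{equation*}
where the last equality uses \eqref{dtvr}. Taking the supremum over $f$ yields the desired inequality and completes the proof.

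There is no serious obstacle: the rotation argument is standard and the only thing to be careful about is that, after writing $|I|=e^{-i\theta}I$, one must take the real part of the integrand before bounding it by a real test function, so that the bound $\|g\|_\infty\le 1$ indeed follows from $\|f\|_\infty\le 1$.
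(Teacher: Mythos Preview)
Your proof is correct and follows essentially the same approach as the paper: both argue the easy inequality by inclusion of real test functions among complex ones, and for the reverse inequality both write the complex integral in polar form $|I|e^{i\theta}$ (the paper writes $\psi e^{i\rho}$), multiply $f$ by $e^{-i\theta}$, and take the real part to obtain a real test function with sup-norm at most $1$.
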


\begin{proof}[Proof of Lemma \ref{tvdistancecomplex}] By observing that the set of complex-valued measurable functions contains the set of real-valued measurable functions and that $\left\|f\right\|_{\infty,\mathbb{C}} =\left\|f\right\|_{\infty,\mathbb{R}}$  when $f$ is real-valued, thanks to (\ref{dtvr}) we can easily obtain that $$\sup_{\left\|f\right\|_{\infty,\mathbb{C}}\leq 1}\left|\int_{\mathbb{R}^d}f(x)\left( \mu_1(dx)-\mu_2(dx) \right) \right|\geq 2d_{TV}\left(\mu_1,\mu_2 \right). $$ 
	For what concerns the other inequality, let $f$ be a complex-valued measurable function such that $\left\|f\right\|_{\infty,\mathbb{C}}\leq1$. Then there exist $\psi\geq 0$ and $\rho \in [0,2\pi)$ such that 
	
	\begin{align*}
	\int_{\mathbb{R}^d}f(x)\left( \mu_1(dx)-\mu_2(dx) \right) = \psi e^{i\rho}.
	\end{align*} 
	Note that $\left\|\mathfrak{Re}(e^{-i\rho}f)\right\|_{\infty,\mathbb{R}}\leq \left\|e^{-i\rho}f\right\|_{\infty,\mathbb{C}}=\left\|f\right\|_{\infty,\mathbb{C}}\leq 1$. Hence
	
	\begin{align*}
	\left| \int_{\mathbb{R}^d}f(x)\left( \mu_1(dx)-\mu_2(dx) \right)\right| & = 	\int_{\mathbb{R}^d} \mathfrak{Re}(e^{-i\rho}f(x))\left( \mu_1(dx)-\mu_2(dx) \right)\\
	&\leq	\sup_{\left\|g\right\|_{\infty,\mathbb{R}}\leq 1}\left|\int_{\mathbb{R}^d}g(x)\left( \mu_1(dx)-\mu_2(dx) \right) \right|= 2d_{TV}\left(\mu_1,\mu_2 \right) 
	\end{align*}
	
	where for the last equality we use (\ref{dtvr}). Since $f$ is arbitrary, we can conclude the proof.
\end{proof}

\begin{proof}[Proof of Lemma \ref{abs_continuity}]
	Let $\xi$ be an absolutely continuous random variable independent of $\left( Y,Z\right) $ where the law of $Y$ has an absolutely continuous component and let us observe that  $\mu_{(Y,Z)}\left(dy,dz\right)=\mu_{Z\mid Y=y}(dz)\mu_{Y}(dy)$ where $\mu_{Z\mid Y=y}(dz)$ denotes the conditional law of $Z$ given $Y=y$.	Given $f:\mathbb{R}^2\rightarrow \mathbb{R}_+$  a non-negative measurable function, one has 
	\begin{align*}
	\mathbb{E}&\left(f\left(Y,Z+\xi \right)\right) = \int_{\mathbb{R}^3} f(y,z+\epsilon)\mu_{\left(Y,Z,\xi \right) }\left(dy,dz,d\epsilon \right)= \int_{\mathbb{R}^3} f(y,z+\epsilon)p_\xi\left(\epsilon \right) \mu_{(Y,Z)}\left(dy,dz\right) d\epsilon\\
	&= \int_{\mathbb{R}^3} f(y,z+\epsilon)p_\xi\left(\epsilon \right) \mu_{Z\mid Y=y}(dz)\mu_{Y}(dy) d\epsilon = \int_{\mathbb{R}^3} f(y,x)p_\xi\left(x-z \right) \mu_{Z\mid Y=y}(dz)\mu_{Y}(dy)dx\\
	&=\int_{\mathbb{R}^2}f(y,x) \int_{\mathbb{R}}p_\xi\left(x-z \right) \mu_{Z\mid Y=y}(dz)p_{Y}(y)dydx+\int_{\mathbb{R}^2} f(y,x)\int_{\mathbb{R}}p_\xi\left(x-z \right) \mu_{Z\mid Y=y}(dz)\mu_{Y,s}(dy)dx
	\end{align*}
	where $\int_{\mathbb{R}^2}\int_{\mathbb{R}}p_\xi\left(x-z \right) \mu_{Z\mid Y=y}(dz)p_{Y}(y)dydx$ is positive since $Y$ has an absolutely continuous component and $\xi$ is an absolutely continuous random variable.
	Thus the law of $\left(Y,Z+\xi \right)$ has an absolutely continuous component.
\end{proof}

\begin{proof}[Proof of Lemma \ref{decomposition}]
	Let us first observe that by Fatou's lemma, (\ref{conv.density}) and the fact that $p$ is a probability density, one has
	
	$$\liminf_{M\rightarrow \infty}\int_{\mathbb{R}^d}p_{Y_M}(x) dx \geq \int_{\mathbb{R}^d}\liminf_{M\rightarrow \infty} p_{Y_M}(x)dx \geq \int_{\mathbb{R}^d}p(x)dx = 1.$$
	
	Moreover $\int_{\mathbb{R}^d} p_{Y_M}(x)dx \leq 1 $ for each $M\in \mathbb{N}^*$. Therefore we can conclude that 
	\begin{equation} \label{intpn}
	\lim_{M\rightarrow \infty} \int_{\mathbb{R}^d} p_{Y_M}(x)dx = 1.
	\end{equation}
	As an immediate consequence of (\ref{intpn}) we have that 
	
	\begin{equation}\label{conv.singularpart}
	\lim_{M\rightarrow \infty} \mu_{Y_M,s}(\mathbb{R}^d) = 1-\lim_{M\rightarrow \infty}\int_{\mathbb{R}^d} p_{Y_M}(x)dx = 0. 
	\end{equation} 
	Let us now prove the $L^1$ convergence. For each $M\in \mathbb{N}^*$,
	
	$$\int_{\mathbb{R}^d}\left|p(x)- p_{Y_M}(x)\right| dx =  2\int_{\mathbb{R}^d}\left( p(x)- p_{Y_M}(x)\right)^+dx-\int_{\mathbb{R}^d}\left(  p(x)-p_{Y_M}(x)\right)dx$$
	
	where the second component of the right-hand side goes to $0$ as $M\rightarrow\infty$ by (\ref{intpn}) and the fact that $p$ is a probability density while the first component goes to $0$ as $M\rightarrow\infty$ by Lebesgue
	theorem and the fact that $0 \leq\liminf_{M\rightarrow \infty}\left( p(x)- p_{Y_M}(x)\right)^+\leq\limsup_{M\rightarrow \infty}\left( p(x)- p_{Y_M}(x)\right)^+= \left( \limsup_{M\rightarrow \infty}\left( p(x)- p_{Y_M}(x)\right) \right)^+= 0$.\\ Hence $\lim_{M\rightarrow \infty}\int_{\mathbb{R}^d}\left|p(x)- p_{Y_M}(x)\right| dx =0$.\\
	Let us now prove the second point of the Lemma. For a fixed $M\in\mathbb{N}^*$, one has
	\begin{align*}
	d_{TV}\left(\mu_{Y_M},\mu_{Y} \right)&= \sup_{A\in \mathcal{B}\left(\mathbb{R}^d\right) }\left| \mu_{Y_M}\left(A \right)-\mu_{Y} \left(A \right)\right|= \sup_{A\in \mathcal{B}\left(\mathbb{R}^d \right) }\left| \mu_{Y_M,c}\left(A \right)+\mu_{Y_M,s}\left(A \right)-\mu_{Y} \left(A \right)\right|\\
	&\leq \sup_{A\in \mathcal{B}\left(\mathbb{R}^d \right) }\left|\mu_{Y_M,c}\left(A \right)-\mu_{Y} \left(A \right) \right| + \mu_{Y_M,s}\left(\mathbb{R}^d\right) \leq \int_{\mathbb{R}^d}\left| p_{Y_M}(x)-p(x)\right| dx+ \mu_{Y_M,s}\left(\mathbb{R}^d\right).
	\end{align*}   
	and the right-hand side tends to $0$ as $M\rightarrow\infty$ thanks to what has been proved in the previous steps.	
\end{proof}

\end{document}